\renewcommand{\phi}{\varphi}
\def\theenumi{\@roman\c@enumi}
\theoremstyle{plain}
\newtheorem*{NewPropositionA}{Proposition A}	
\newtheorem*{NewLemmaB}{Lemma B}	
\newtheorem*{NewPropositionB}{Proposition C}
\newtheorem*{NewTheoremC}{Theorem D}
\newtheorem*{NewConjecture}{Conjecture}
\newtheorem*{NewPropositionE}{Proposition E}
\newtheorem{theorem}[subsection]{Theorem}
\newtheorem{lemma}[subsection]{Lemma}
\newtheorem{examples}[subsection]{Examples}
\newtheorem{proposition}[subsection]{Proposition}
\theoremstyle{definition}
\newtheorem{definition}[subsection]{Definition}
\title[]{A note on semicompleteness of graph products of abelian groups}
\author{Philip M\"oller and Olga Varghese}
\date{\today}
\address{Philip M\"oller\\
	Department of Mathematics\\
	University of M\"unster\\ 
	Einsteinstra\ss e 62\\
	48149 M\"unster (Germany)}
\email{philip.moeller@uni-muenster.de}
\address{Olga Varghese\\
	Department of Mathematics\\
	Otto-von-Guericke University of Magdeburg\\ 
	Universit\"atsplatz 2\\
	39106 Magdeburg (Germany)}
\email{olga.varghese@ovgu.de}
\begin{document}
\begin{abstract} 
In this short note we prove that a graph product $G_\Gamma$ of finitely generated abelian groups is semicomplete -- that is the kernel of the natural homomorphism ${\rm Aut}(G_\Gamma)\to{\rm Aut}(G_\Gamma^{ab})$ induced by the abelianization of $G_\Gamma$ is equal to the inner automorphisms -- if and only if $\Gamma$ does not have a separating star.

\vspace{1cm}
\hspace{-0.6cm}
{\bf Key words.} \textit{Graph products of groups, {\rm IA}-automorphisms, semicomplete groups.}	
\medskip

\medskip
\hspace{-0.5cm}{\bf 2010 Mathematics Subject Classification.} Primary: 20E36; Secondary: 20F65.
\end{abstract}

\thanks{The first author is funded
	by a stipend of the Studienstiftung des deutschen Volkes and  by the Deutsche Forschungsgemeinschaft (DFG, German Research Foundation) under Germany's Excellence Strategy EXC 2044--390685587, Mathematics M\"unster: Dynamics-Geometry-Structure. The second author is supported by DFG grant VA 1397/2-1. This work is part of the PhD project of the first author.}

\maketitle

\section{Introduction}

Let $G$ be a group and ${\rm Aut}(G)$ the automorphism group of $G$. We denote by $\Phi\colon{\rm Aut}(G)\to{\rm Aut}(G^{ab})$ the natural homomorphism  induced from
the abelianization map $G\twoheadrightarrow G^{ab}$. Following Bachmuth \cite{Bachmuth} we call the kernel of this map IA$(G):=\ker(\Phi)$. In the abbreviation IA the I stands for identity and the A for abelianization. The group IA$(G)$ contains the group of inner automorphisms of $G$, but it is in general much larger than ${\rm Inn}(G)$. Hence, the group IA$(G)$ reflects the complexity of the algebraic structure of ${\rm Aut}(G)$. 
Many groups in geometric group theory have rigid automorphism groups, in the sense that IA$(G)={\rm Inn}(G)$.
By definition, a group $G$ is called \emph{semicomplete} if IA$(G)={\rm Inn}(G)$. 

Let us discuss  two extreme cases: the subgroup IA$(G)$ is trivial if and only if $G$ is abelian. Hence, all abelian groups are semicomplete. The second extreme case is where ${\rm Aut}(G)={\rm Inn}(G)$. This case has a connection to the stronger rigidity notion of \emph{completeness}. A group $G$ is called \emph{complete} if $G$ has trivial center and every automorphism of $G$ is inner. The notion of completeness goes back to  H\"older \cite{Hoelder}, where he studied decompositions of a group $G$. More precisely, given a group $G$ and a normal subgroup $N$ of $G$ one can ask the question how the group $N$ is involved in the decompositions of a group $G$ into smaller pieces where one puzzle piece is equal to the group $N$. H\"older proved that if $N$ is  complete, then any short exact sequence 
$1\to N\to G\to L\to 1$
splits and $G$ is isomorphic to the direct product $N\times L$. 

Clearly, if $G$ is complete, then $G$ is also semicomplete. An example of a group that is semicomplete but not complete is the free group of rank two $F_2:= F(\left\{x_1, x_2\right\})$. The automorphism induced by the map that maps $x_1$ to $x_2$ and $x_2$ to $x_1$ is clearly not inner. Nielsen showed in \cite{Nielsen} that the free group $F_2$ is semicomplete. In the case where the rank of the free group is larger than two the group of IA-automorphisms is  much larger than ${\rm Inn}(F_n)$. In particular, the group IA$(F_3)$ is not even finitely presentable \cite{McCool}. Free groups are special cases of free products where the puzzle pieces are infinite cyclic groups. The semicompleteness of free products of arbitrary groups was studied by Andreadakis in \cite{Andreadakis}. He showed that a free product of two groups $A$ and $B$ is semicomplete if and only if $A$ and $B$ are both abelian. Therefore a free product of more than two groups is never semicomplete. 

Given groups $G_1,\ldots, G_n$, the free product construction $G_1*G_2*\ldots *G_n$ is one tool to obtain a new group out of the given groups. Graph products of groups generalize this concept by building new group out of vertex labeled finite graphs where the vertices are labeled by the given groups $G_1,\ldots, G_n$. Given a finite simplicial graph $\Gamma$ and a collection of non-trivial groups $\{ G_u \mid u \in V(\Gamma) \}$ indexed by the vertex-set $V(\Gamma)$ of $\Gamma$, the \emph{graph product} $G_\Gamma$ is defined as the quotient
$$ \left( \underset{u \in V(\Gamma)}{\ast} G_u \right) / \langle \langle [g,h]=1, \ g \in G_u, h \in G_v, \{u,v\} \in E(\Gamma) \rangle \rangle$$
where $E(\Gamma)$ denotes the edge-set of $\Gamma$. These
groups were introduced by Baudisch in \cite{Baudisch} for infinite cyclic vertex groups and later by Green for arbitrary vertex groups \cite{Green}. 

Given a graph $\Gamma$ and a vertex $v\in V(\Gamma)$, we define the \emph{star of $v$} as follows: $st(v):=\left\{v\right\}\cup \left\{w\in V(\Gamma)\mid \left\{v,w\right\}\in E(\Gamma)\right\}$. By definition, a graph $\Gamma$ has a \emph{separating star} if there exists a vertex $v\in V(\Gamma)$ such that the subgraph spanned by the vertex set $V(\Gamma)-st(v)$ is disconnected. For example, if $\Gamma$ has more than two connected components, then $\Gamma$ has a separating star. 

Many group theoretical properties of graph products of groups and their automorphism groups have been translated in combinatorial structure of the
defining graph $\Gamma$, see for example \cite{LohreySenizergues},\cite{Meier}, \cite{Mihalik}, \cite{PaoliniShelah}, \cite{Varghese00}, \cite{Varghese0}, \cite{Varghese1}, \cite{Varghese2}. An interesting question is how combinatorial properties of $\Gamma$ influence the completeness of $G_\Gamma$. 

\begin{NewPropositionA}
	Let $G_\Gamma$ be a graph product of directly indecomposable groups such that the center of the vertex groups is non-trivial. The following statements are equivalent:
	\begin{enumerate}
		\item The graph product $G_\Gamma$ is complete.
		\item All vertex groups are isomorphic to $\mathbb{Z}/2\mathbb{Z}$, $\Gamma$ has no separating star, $\Gamma$ is asymmetric and for any pair of vertices $v, w\in V(\Gamma), v\neq w$,
		$st(v)$ is never a subset of $st(w)$. Further $\Gamma$ is connected and contains at least $7$ vertices.
	\end{enumerate}
\end{NewPropositionA}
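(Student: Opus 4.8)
The plan is to deduce Proposition A from two ingredients: our characterisation of semicompleteness (\emph{a graph product of finitely generated abelian groups is semicomplete if and only if its defining graph has no separating star}) and the description of the automorphism group of a graph product $G_\Gamma$ of finitely generated abelian groups in terms of $\Gamma$, by which $\mathrm{Aut}(G_\Gamma)$ is generated by inner automorphisms, the factor automorphisms $\iota_v(\phi)$ (for $\phi\in\mathrm{Aut}(G_v)$, acting as $\phi$ on $G_v$ and trivially on the other vertex groups), the label-preserving automorphisms of $\Gamma$ (graph automorphisms), the partial conjugations, and the transvections associated with ordered pairs $v\neq w$ such that $lk(v)\subseteq st(w)$. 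Two facts will be used throughout: $G_\Gamma$ is complete iff $Z(G_\Gamma)=1$ and $\mathrm{Aut}(G_\Gamma)=\mathrm{Inn}(G_\Gamma)$; and $Z(G_\Gamma)$ is the graph product over $\{v\mid st(v)=V(\Gamma)\}$ of the subgroups $Z(G_v)$, so that --- since every $Z(G_v)$ is non-trivial --- $Z(G_\Gamma)=1$ precisely when no vertex of $\Gamma$ is adjacent to all the others.

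For (ii)$\Rightarrow$(i), under hypothesis (ii) I would check that each generator above is inner. There are no non-trivial factor automorphisms since $\mathrm{Aut}(\mathbb{Z}/2)=1$; no non-trivial graph automorphisms since $\Gamma$ is asymmetric and --- all vertex groups being equal --- every graph automorphism is label-preserving; for each $v$ the graph $V(\Gamma)-st(v)$ is connected (no separating star), so the only partial conjugations are conjugations by an element of $G_v$, and these are inner automorphisms of $G_\Gamma$ because $G_v\cong\mathbb{Z}/2$ is contained in the centre of its normaliser $G_{st(v)}=G_v\times G_{lk(v)}$; and there are no transvections because $st(v)\not\subseteq st(w)$ for $v\neq w$. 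Hence $\mathrm{Aut}(G_\Gamma)=\mathrm{Inn}(G_\Gamma)$. Finally, (ii) forbids a vertex $v$ with $st(v)=V(\Gamma)$ (it would satisfy $st(w)\subseteq st(v)$ for every other $w$), so $Z(G_\Gamma)=1$ and $G_\Gamma$ is complete.

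For (i)$\Rightarrow$(ii), assume $G_\Gamma$ complete, so $\mathrm{Out}(G_\Gamma)=1$ and $Z(G_\Gamma)=1$; in particular no vertex is adjacent to all others, so for each $v$ there is $u\notin st(v)$. If $\iota_v(\phi)$ were inner, say conjugation by $g$, then $g$ would centralise $G_u$ and normalise $G_v$, whence $g\in C_{G_\Gamma}(G_u)\cap N_{G_\Gamma}(G_v)=(Z(G_u)\times G_{lk(u)})\cap(G_v\times G_{lk(v)})$; as $v\notin st(u)$, comparing supports forces $g\in G_{lk(v)}$, so $\iota_v(\phi)$ is trivial on $G_v$ and $\phi=\mathrm{id}$. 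Thus $\iota_v\colon\mathrm{Aut}(G_v)\hookrightarrow\mathrm{Out}(G_\Gamma)=1$, so $\mathrm{Aut}(G_v)=1$ and $G_v\cong\mathbb{Z}/2$ for all $v$; in particular $G_\Gamma$ is semicomplete, so by our characterisation $\Gamma$ has no separating star. A non-trivial graph automorphism permutes the summands of $G_\Gamma^{ab}=\bigoplus_v G_v^{ab}$ non-trivially, hence is not inner, so $\Gamma$ is asymmetric. If $st(v)\subseteq st(w)$ for some $v\neq w$, then $v\sim w$ and $lk(v)\subseteq lk(w)\cup\{w\}$; using this one checks that the assignment fixing every generator except $v$, which is sent to $vw$, respects the defining relations of $G_\Gamma$, defining an automorphism of order two that acts on $G_\Gamma^{ab}$ by $e_v\mapsto e_v+e_w$ and so is not inner --- a contradiction. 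Finally, a disconnected graph without a separating star is a disjoint union of two complete graphs, which carries a non-trivial label-preserving graph automorphism unless both have a single vertex (the case $G_\Gamma\cong\mathbb{Z}/2\ast\mathbb{Z}/2$, which is not complete); so $\Gamma$ is connected, and a direct inspection of the finitely many asymmetric graphs on at most six vertices --- each of which has a separating star or a pair $v\neq w$ with $st(v)\subseteq st(w)$ --- gives $|V(\Gamma)|\geq 7$.

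The step I expect to be the main obstacle is checking that $v\mapsto vw$ (with the other generators fixed) respects every defining relation of $G_\Gamma$ when $st(v)\subseteq st(w)$: one must examine the relations $[g,h]=1$ with $g\in G_v$ and $h\in G_x$, $x\sim v$, and precisely here the containment --- equivalently $v\sim w$ together with $lk(v)\subseteq st(w)$ --- is used, guaranteeing that $vw$ commutes with $G_x$. The rest (the centraliser--support argument for factor automorphisms, the effect of graph automorphisms and partial conjugations on $G_\Gamma^{ab}$, and the finite case check) is routine.
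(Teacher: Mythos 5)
Your proof follows the same overall strategy as the paper's: reduce to a generating set of ${\rm Aut}(G_\Gamma)$ (graph automorphisms, factor automorphisms, partial conjugations, transvections) and decide which generators can be inner, then use the no-separating-star criterion for the partial conjugations and the Erd\H{o}s--R\'enyi inspection of small asymmetric graphs at the end. Two of your sub-arguments genuinely differ, and both work. First, to get $G_v\cong\mathbb{Z}/2\mathbb{Z}$ you show completeness forces ${\rm Aut}(G_v)=1$ via the support argument $C_{G_\Gamma}(G_u)\cap N_{G_\Gamma}(G_v)\le G_{lk(v)}$ for $u\notin st(v)$; the paper instead passes through semicompleteness, Lemma B (all vertex groups abelian), the inversion factor automorphism, and direct indecomposability. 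Your route does not even use the indecomposability hypothesis, but it does lean on the standard centraliser, normaliser and parabolic-intersection formulas for graph products, which you should cite explicitly. Second, to rule out dominated transvections you observe that $v\mapsto vw$ acts as $e_v\mapsto e_v+e_w$ on $G_\Gamma^{ab}\cong(\mathbb{Z}/2\mathbb{Z})^{n}$ and hence cannot be inner; the paper argues instead via the deletion condition in the right-angled Coxeter group, and your abelianization argument is the cleaner of the two. One point to tighten: you index transvections by the condition $lk(v)\subseteq st(w)$, which for \emph{non-adjacent} $v,w$ is not excluded by the hypothesis $st(v)\not\subseteq st(w)$ of (ii), so your claim ``there are no transvections'' needs a word of justification. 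For order-two vertex groups the map $v\mapsto vw$ with $v\not\sim w$ does not preserve the relation $v^{2}=1$; the generator attached to such a domination is $v\mapsto wvw$, i.e.\ a partial conjugation over the component $\{v\}$ of $\Gamma-st(w)$, and is therefore already disposed of by the no-separating-star condition. With that remark added, the argument is complete.
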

We note that the assumption that vertex groups are directly indecomposable is easy to satisfy. If a vertex group splits as a direct product, we can just replace the corresponding vertex with a clique corresponding to the direct decomposition.

An example of a graph satisfying the conditions in Proposition A is the so called Frucht graph $\mathcal{F}$ introduced in \cite{Frucht}, see Figure 1.
If we assign a cyclic group of order two to each vertex of $\mathcal{F}$, then $G_\mathcal{F}$ is complete.

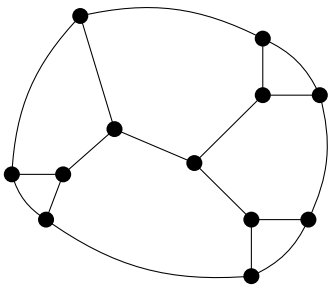
\begin{figure}[h]
	\begin{center}
	\captionsetup{justification=centering}
\begin{tikzpicture}[scale=1.5]

	\coordinate (A) at (-1,1.3);
	\coordinate (B) at (0.6,1.1);
	\coordinate (C) at (1.1,0.6);
	\coordinate (D) at (-1.15,-0.1);
	\coordinate (E) at (-1.3,-0.5);
	\coordinate (F) at (1,-0.5);
	\coordinate (G) at (0.5,-1);
	\coordinate (H) at (-1.6,-0.1);
	\coordinate (I) at (-0.7,0.3);
	\coordinate (J) at (0.6,0.6);
	\coordinate (K) at (0.5,-0.5);
	\coordinate (L) at (0,0);

	\draw[bend right=20] (A) to (H);
	\draw[bend right=20] (H) to (E);
	\draw[bend right=20] (E) to (G);
	\draw[bend right=20] (G) to (F);
	\draw[bend right=20] (F) to (C);
	\draw[bend right=20] (C) to (B);
	\draw[bend right=20] (B) to (A);
	\draw (A) to (I);
	\draw (I) to (D);
	\draw (D) to (H);
	\draw (D) to (E);
	\draw (I) to (L);
	\draw (L) to (K);
	\draw (K) to (F);
	\draw (K) to (G);
	\draw (J) to (B);
	\draw (J) to (C);
	\draw (J) to (L);
	
	\fill[color=black] (A) circle (2pt);
	\fill[color=black] (B) circle (2pt);
	\fill[color=black] (C) circle (2pt);
	\fill[color=black] (D) circle (2pt);
	\fill[color=black] (E) circle (2pt);
	\fill[color=black] (F) circle (2pt);
	\fill[color=black] (G) circle (2pt);
	\fill[color=black] (H) circle (2pt);
	\fill[color=black] (I) circle (2pt);
	\fill[color=black] (J) circle (2pt);
	\fill[color=black] (K) circle (2pt);
	\fill[color=black] (L) circle (2pt);
	
\end{tikzpicture}
\caption{Frucht graph $\mathcal{F}$.}
\end{center}
\end{figure}

The next goal of this note is to give a characterization of those graph products of groups in terms of combinatorial structure of graphs that are semicomplete. Our first result concerning semicompleteness is the following lemma.
\begin{NewLemmaB}(see Lemmata \ref{hasSeparatingStar}, \ref{abelian})
	Let $G_\Gamma$ be a graph product of groups.
	\begin{enumerate}
	 	\item If $\Gamma$ has a separating star, then $G_\Gamma$ is not semicomplete.	
		\item Assume that $V(\Gamma)$ is not a star of a vertex. If $G_\Gamma$ is semicomplete, then all vertex groups are abelian.
		\end{enumerate}
\end{NewLemmaB}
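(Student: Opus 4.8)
The plan is to prove both statements by exhibiting an explicit automorphism of $G_\Gamma$ which lies in $\mathrm{IA}(G_\Gamma)$ but not in $\mathrm{Inn}(G_\Gamma)$ --- for (i) under the assumption that some star separates, and for (ii), in contrapositive form, under the assumption that $V(\Gamma)$ is not a star and some vertex group is non-abelian. In each case the automorphism is a \emph{partial conjugation}: one picks a vertex $v$, an element $g\in G_v$ and a set $W$ of vertices disjoint from $st(v)$, and defines a map on $G_\Gamma$ by letting $g$ conjugate the vertex groups $G_w$ with $w\in W$ and act as the identity on all other vertex groups. Such a prescription extends to an automorphism of $G_\Gamma$ whenever every edge of $\Gamma$ joining $W$ to $V(\Gamma)-W$ has its endpoint outside $W$ in $\lk(v)$: then $g$ commutes with that vertex group, so the defining commutator relations are preserved, and the analogous map built from $g^{-1}$ is an inverse. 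Since conjugation acts trivially on $G_\Gamma^{ab}=\bigoplus_u G_u^{ab}$, every partial conjugation lies in $\mathrm{IA}(G_\Gamma)$, so the entire content in both parts is to verify non-innerness. The two standard structural inputs I will use are: for any induced subgraph $\Lambda\subseteq\Gamma$ there is a retraction $G_\Gamma\twoheadrightarrow G_\Lambda$ killing all vertex groups outside $\Lambda$; and the centralizer of a vertex group in $G_\Gamma$ equals $C_{G_\Gamma}(G_v)=Z(G_v)\times G_{\lk(v)}\leq G_{st(v)}=G_v\times G_{\lk(v)}$.

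For (i), pick a vertex $v$ whose star separates and write $V(\Gamma)-st(v)=Y\sqcup Y'$ with $Y$ a connected component and $Y'\neq\varnothing$ the union of the remaining ones; then there is no edge between $Y$ and $Y'$, and every edge from $Y$ to $st(v)$ ends in $\lk(v)$ because $v$ has no neighbour in $Y$. Fix $1\neq g\in G_v$ and let $\pi$ be the partial conjugation with $W=Y$, so $\pi\in\mathrm{IA}(G_\Gamma)$. Suppose $\pi=c_x$; evaluating $\pi=c_x$ on the vertex groups gives $x\in C_{G_\Gamma}(G_w)$ for $w\in st(v)\cup Y'$ and $g^{-1}x\in C_{G_\Gamma}(G_w)$ for $w\in Y$. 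From the centralizer formula $x\in C_{G_\Gamma}(G_v)=Z(G_v)\times G_{\lk(v)}$, so write $x=x_vx_l$ with $x_v\in G_v$ and $x_l\in G_{\lk(v)}$. Apply the retraction onto $G_{\{v\}\cup Y}$; as $v$ is isolated from $Y$ this group is $G_v\ast G_Y$, and the retraction kills $G_{\lk(v)}$, so it sends $g^{-1}x$ to $g^{-1}x_v\in G_v$, an element still centralizing $G_Y$. Since $G_Y\neq 1$, the centralizer of $G_Y$ in $G_v\ast G_Y$ lies in $G_Y$, whence $g^{-1}x_v\in G_v\cap G_Y=1$ and $x_v=g$. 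Doing the same with $Y'$ in place of $Y$ sends $x$ to $x_v$, which centralizes $G_{Y'}$ and so lies in $G_v\cap G_{Y'}=1$; thus $x_v=1$. Hence $g=x_v=1$, contradicting $g\neq 1$, so $\pi\notin\mathrm{Inn}(G_\Gamma)$.

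For (ii), assume $V(\Gamma)$ is not the star of any vertex and that some vertex group $G_u$ is non-abelian; I claim $G_\Gamma$ is not semicomplete. Because $st(u)\neq V(\Gamma)$, the set $Z:=V(\Gamma)-st(u)$ is non-empty and contains no neighbour of $u$, so any edge from $Z$ to $st(u)$ ends in $\lk(u)$. Choose $a\in G_u-Z(G_u)$ and let $\phi$ be the partial conjugation with $v=u$, $g=a$, $W=Z$, so $\phi\in\mathrm{IA}(G_\Gamma)$. If $\phi=c_x$, then $x\in C_{G_\Gamma}(G_u)=Z(G_u)\times G_{\lk(u)}$, say $x=x_ux_l$ with $x_u\in Z(G_u)$, and $a^{-1}x\in C_{G_\Gamma}(G_w)$ for all $w\in Z$. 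The retraction onto $G_{\{u\}\cup Z}=G_u\ast G_Z$ kills $G_{\lk(u)}$ and sends $a^{-1}x$ to $a^{-1}x_u\in G_u$, which centralizes $G_Z\neq 1$ and hence lies in $G_u\cap G_Z=1$. Thus $x_u=a$, contradicting $a\notin Z(G_u)$, so $\phi\notin\mathrm{Inn}(G_\Gamma)$.

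The one genuinely delicate point is the relation-check that the partial conjugations are well defined, and there the only thing to watch is that a vertex of $st(v)$ adjacent to the conjugated region $W$ cannot be $v$ itself --- since $W$ is disjoint from $st(v)$ --- and therefore lies in $\lk(v)$, where $g$ commutes with the vertex group. Everything else is routine bookkeeping with parabolic subgroups together with the two quoted structural facts, and the non-innerness in both parts reduces to the triviality of $G_v\cap G_W$ inside the free product $G_v\ast G_W$ cut out by a suitable retraction.
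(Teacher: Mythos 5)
Your proof is correct and follows essentially the same route as the paper: part (i) is witnessed by a partial conjugation supported on one component of $V(\Gamma)-st(v)$, and part (ii) by a conjugation-type automorphism supported away from $st(u)$ (the paper's factor automorphism $h\mapsto aha^{-1}$ on $G_u$ differs from your partial conjugation on $V(\Gamma)-st(u)$ only by the global conjugation $c_a$). The only real difference is that the paper asserts non-innerness as a known fact about partial conjugations, whereas you verify it directly via the centralizer formula $C_{G_\Gamma}(G_v)=Z(G_v)\times G_{lk(v)}$ and retractions onto parabolic subgroups, which is a correct and complete filling-in of the details the paper leaves implicit.
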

We note that there exist semicomplete graph products where the graph is a star of a vertex and not all vertex groups are abelian. For example, let us discuss the graph product defined via the graph $\Gamma$ in Figure 2.

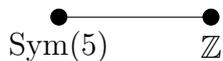
\begin{figure}[h]
	\begin{center}
	\captionsetup{justification=centering}
		\begin{tikzpicture}
			\draw[fill=black]  (0,0) circle (3pt);
			\draw[fill=black]  (2,0) circle (3pt);		
			\node at (0,-0.4) {${\rm Sym}(5)$}; 
			\node at (2,-0.4) {$\mathbb{Z}$};		
			\draw (0,0)--(2,0);		
		\end{tikzpicture}
	\caption{Vertex labeled graph $\Gamma$.}
	\end{center}
\end{figure}

The graph product $G_\Gamma$ is a direct product ${\rm Sym}(5)\times\mathbb{Z}$. It is known that ${\rm Aut}({\rm Sym}(5))={\rm Inn}({\rm Sym}(5))\cong{\rm Sym}(5)$. Hence the vertex groups of $G_\Gamma$ are semicomplete. Further the subgroups ${\rm Sym}(5)\times \left\{0\right\}$ and $\left\{1\right\}\times\mathbb{Z}$ are characteristic, therefore  $G_\Gamma$ is also semicomplete. Hence the graph product $G_\Gamma$ is a star of a vertex and is semicomplete, but not all vertex groups are abelian. For more information on semicompleteness of direct product of (finite) groups, see \cite{Panagopoulos}. 

Let us consider one more example of a graph product where the vertex groups are semicomplete.

\begin{figure}[h]
	\begin{center}
	\captionsetup{justification=centering}
		\begin{tikzpicture}
			\draw[fill=black]  (0,0) circle (3pt);
			\draw[fill=black]  (2,0) circle (3pt);
			\draw[fill=black]  (4,0) circle (3pt);		
			\node at (0,-0.4) {${\rm Sym}(5)$}; 
			\node at (2,-0.4) {$\mathbb{Z}/2\mathbb{Z}$};
			\node at (4,-0.4) {$\mathbb{Z}/2\mathbb{Z}$};		
			\draw (0,0)--(2,0);	
			\draw (2,0)--(4,0);		
		\end{tikzpicture}
	\caption{A graph $\Gamma$ with semicomplete vertex groups.}
	\end{center}
\end{figure}
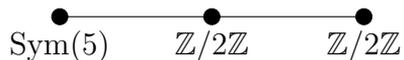
The graph product $G_\Gamma$ defined via the graph in Figure 3 is an amalgamated free product $({\rm Sym}(5)\times\mathbb{Z}/2\mathbb{Z})*_{\mathbb{Z}/2\mathbb{Z}} (\mathbb{Z}/2\mathbb{Z}\times\mathbb{Z}/2\mathbb{Z})$. We know by Andreadakis theorem that the subgroup ${\rm Sym}(5)*\mathbb{Z}/2\mathbb{Z}$ is not semicomplete. It follows directly from the next proposition that $G_\Gamma$ is also not semicomplete. 
\begin{NewPropositionB}\label{star}
	Let $G_\Gamma$ be a graph product of groups. Let $\Delta$ denote the subgraph of $\Gamma$ spanned by all vertices $v\in V(\Gamma)$ such that $st(v)=V(\Gamma)$. 
	
	If the vertex groups $G_v$, $v\in V(\Delta)$ are abelian, then $G_\Gamma$ is semicomplete if and only if the graph product $G_{\langle V(\Gamma)-V(\Delta)\rangle}$ is semicomplete where $\langle V(\Gamma)-V(\Delta)\rangle$ is the subgraph spanned by the vertex set  $V(\Gamma)-V(\Delta)$.
\end{NewPropositionB}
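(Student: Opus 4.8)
The plan is to make the direct-product structure encoded in $\Delta$ completely explicit and then transport {\rm IA}-automorphisms between the two resulting factors. First I would record that any two vertices of $\Delta$ are adjacent and that each vertex of $\Delta$ is adjacent to every other vertex of $\Gamma$: if $v\in V(\Delta)$ and $u\in V(\Gamma)$ with $u\neq v$, then $u\in st(v)=V(\Gamma)$, so $\{u,v\}\in E(\Gamma)$. Hence every vertex group $G_v$ with $v\in V(\Delta)$ commutes with every other vertex group, so $A:=G_\Delta=\prod_{v\in V(\Delta)}G_v$ is abelian, $A\subseteq Z(G_\Gamma)$, and $G_\Gamma$ splits as an internal direct product $G_\Gamma=A\times H$, where $H:=G_{\Gamma'}$ with $\Gamma':=\langle V(\Gamma)-V(\Delta)\rangle$; here $A\cap H=\{1\}$ by the standard identity $G_S\cap G_T=G_{S\cap T}$. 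Next, no vertex $v\in V(\Gamma')$ is adjacent to all the other vertices of $\Gamma'$: such a $v$ would not lie in $V(\Delta)$, so $st(v)\neq V(\Gamma)$ and there is $w\in V(\Gamma)$ with $w\notin st(v)$; this $w$ cannot lie in $\Gamma'$, since $v$ is adjacent to every vertex of $\Gamma'$, so $w\in V(\Delta)$ — but then $w$ is adjacent to $v$, a contradiction. Using the standard description of the center of a graph product as the direct product of the groups $Z(G_v)$ over those vertices $v$ whose star is the whole vertex set, we conclude that $Z(H)$ is trivial (trivially so if $\Gamma'=\emptyset$), and therefore $Z(G_\Gamma)=Z(A)\times Z(H)=A$.

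For the implication ``$G_\Gamma$ semicomplete $\Rightarrow$ $H$ semicomplete'', assume $G_\Gamma$ is semicomplete and take $\psi\in{\rm IA}(H)$. Form $\widetilde\psi:={\rm id}_A\times\psi\in{\rm Aut}(A\times H)={\rm Aut}(G_\Gamma)$. Since $G_\Gamma^{ab}=A^{ab}\times H^{ab}$ and $\psi$ induces the identity on $H^{ab}$, the automorphism $\widetilde\psi$ induces the identity on $G_\Gamma^{ab}$, so $\widetilde\psi\in{\rm IA}(G_\Gamma)={\rm Inn}(G_\Gamma)$; say $\widetilde\psi$ is conjugation by $(a_0,h_0)$. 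Restricting to the subgroup $\{1\}\times H$ gives $\psi(h)=h_0hh_0^{-1}$ for all $h\in H$, so $\psi\in{\rm Inn}(H)$.

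For the converse, assume $H$ is semicomplete and take $\phi\in{\rm IA}(G_\Gamma)$. The center is characteristic and equals $A$, so $\phi(A)=A$; since $A$ is abelian and a direct factor, $G_\Gamma\to G_\Gamma^{ab}$ is injective on $A$, so $\phi(A)=A$ together with $\phi$ inducing the identity on $G_\Gamma^{ab}$ forces $\phi|_A={\rm id}_A$. Now, for $h\in H$, write $\phi(h)=(\alpha(h),\beta(h))$ with respect to $G_\Gamma=A\times H$; projecting the identity $\phi(h_1h_2)=\phi(h_1)\phi(h_2)$ onto each factor shows that $\alpha\colon H\to A$ and $\beta\colon H\to H$ are homomorphisms. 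Comparing, in $G_\Gamma^{ab}=A\times H^{ab}$, the images of $\phi(h)$ and of $h$, and using that $\phi$ induces the identity there, yields $\alpha\equiv 1$ — hence $\phi(H)\subseteq H$ — and that $\beta$ induces the identity on $H^{ab}$ — hence $\beta\in{\rm IA}(H)$. Applying $\phi(H)\subseteq H$ also to $\phi^{-1}\in{\rm IA}(G_\Gamma)$ gives $\phi(H)=H$ and $\phi|_H=\beta$, so by semicompleteness of $H$ there is $h_0\in H$ with $\phi|_H$ equal to conjugation by $h_0$. Finally, the inner automorphism of $G_\Gamma$ given by conjugation by $(1,h_0)$ fixes $A$ pointwise (as $A$ is central) and restricts to conjugation by $h_0$ on $H$, so it agrees with $\phi$ on the generating set $A\cup H$; hence $\phi\in{\rm Inn}(G_\Gamma)$.

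The degenerate cases $V(\Delta)=V(\Gamma)$ (then $H$ is trivial and $G_\Gamma=A$ is abelian, and both sides of the equivalence hold) and $V(\Delta)=\emptyset$ (then $G_\Gamma=H$) are immediate. I expect the only non-formal ingredient, and the real crux, to be the identification $Z(G_\Gamma)=A$: a direct product $A\times H$ decomposes into direct factors in many ways, so a priori $\phi$ need not preserve $A$ or $H$. The center computation forces $A$ to be the correct choice, and once $\phi$ is known to fix $A$ pointwise the hypothesis that $\phi$ induce the identity on $G_\Gamma^{ab}$, read off on elements of $H$, removes the remaining freedom — the $A$-valued ``drift'' $\alpha$ of $\phi|_H$ — and makes $\phi$ respect the decomposition. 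Everything else is bookkeeping with abelianizations.
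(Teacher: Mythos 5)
Your proof is correct and follows essentially the same route as the paper: identify $Z(G_\Gamma)=G_\Delta$, use that it is characteristic to split $\mathrm{IA}$-automorphisms along the direct product $G_\Delta\times G_{\Gamma-\Delta}$, and transport conjugations between $G_\Gamma$ and the factor $G_{\Gamma-\Delta}$. You are in fact somewhat more careful than the paper in the converse direction, where your ``drift'' homomorphism $\alpha$ justifies the step the paper states as ``we can write $f=(f',f'')$'' without comment.
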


Nevertheless, Lemma B shows that the restriction to abelian vertex groups is in most cases necessary. In this note we mostly assume that the vertex groups are finitely generated abelian groups.
Let $G_\Gamma$ be a graph product of the vertex groups $G_1, \ldots , G_n$ that are finitely generated abelian groups. The abelianization of $G_\Gamma$ is equal to the direct product $G_1\times...\times G_n$ and there exist a finite abelian group $T$ and a natural number $k$ such that $G_1\times...\times G_n\cong\mathbb{Z}^k\times T $. Thus we have the map 
$$\Phi\colon{\rm Aut}(G_\Gamma)\to{\rm Aut}(\mathbb{Z}^k\times T).$$

It was proven in \cite[Lem. 1.16]{SaleSusse} that the automorphism group of $\mathbb{Z}^k\times T$  has the structure of a semidirect product, more precisely ${\rm Aut}(\mathbb{Z}^k\times T)\cong T^k\rtimes ({\rm GL}_k(\mathbb{Z})\times{\rm Aut}(T))$. 
Hence for a semicomplete graph product $G_\Gamma$ of finitely generated abelian groups, $\Phi(f)$ contains all the algebraic information we want about $f\in{\rm Aut}(G_\Gamma)$ up to conjugation. 

The next theorem gives us one more entry in the dictionary between algebraic-geometric properties of $G_\Gamma$ and the combinatorial structure of $\Gamma$. 
\begin{NewTheoremC}\label{fgabelian}	
	Let $G_\Gamma$ be a graph product of finitely generated abelian groups.  The following statements are equivalent:
	\begin{enumerate}
		\item The group $G_\Gamma$ is semicomplete.
		\item The graph $\Gamma$ does not have a separating star.
	\end{enumerate}
\end{NewTheoremC}

The crucial puzzle piece of the proof is the description of the generating set of IA$(G_\Gamma)$ given in \cite{SaleSusse}. In the case where the vertex groups are arbitrary abelian groups we conjecture that the same characterization of semicompleteness holds.

\begin{NewConjecture}
	A graph product $G_\Gamma$ of abelian groups  is semicomplete if and only if $\Gamma$ has no separating star.
\end{NewConjecture}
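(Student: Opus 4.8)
The equivalence has an easy half and a hard half. The direction (1)$\Rightarrow$(2) is nothing but the contrapositive of part~(i) of Lemma~B, and that statement is proved there for graph products of arbitrary groups, so it is at our disposal. The work lies entirely in (2)$\Rightarrow$(1): assuming that $\Gamma$ has no separating star, one must show that IA$(G_\Gamma)$ consists only of inner automorphisms. The plan is to reduce this to a statement about a fixed generating set of IA$(G_\Gamma)$. For graph products of finitely generated abelian groups such a generating set is supplied by \cite{SaleSusse}: IA$(G_\Gamma)$ is generated by the \emph{partial conjugations} $\pi_{v,g,C}$ -- where $v\in V(\Gamma)$, $g\in G_v$, the set $C$ is a union of connected components of the subgraph spanned by $V(\Gamma)-st(v)$, and $\pi_{v,g,C}$ conjugates $G_u$ by $g$ for every vertex $u$ of $C$ while fixing all other vertex groups -- together with the \emph{commutator transvections}, which fix every vertex group except one $G_w$ and send each $x\in G_w$ to $x[g,h]$ for a fixed commutator $[g,h]$ with $g\in G_u$, $h\in G_v$, this being a well-defined automorphism only when the triple $(u,v,w)$ satisfies certain combinatorial conditions (and, in the presence of torsion, an extra order condition). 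Since the vertex groups are abelian their individual IA-groups are trivial and contribute nothing. Hence it suffices to check, under the no-separating-star hypothesis, that every partial conjugation is inner and that no nontrivial commutator transvection exists.

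The first point is immediate. Because $\Gamma$ has no separating star, the subgraph on $V(\Gamma)-st(v)$ is connected, so the only possibilities for $C$ are $\emptyset$ -- giving the identity -- and the whole of $V(\Gamma)-st(v)$. In the latter case $\pi_{v,g,C}$ is exactly conjugation by $g$: an element $g\in G_v$ centralizes $G_u$ for every $u\in st(v)$ and conjugates $G_u$ for $u\notin st(v)$, which is precisely the effect of $\pi_{v,g,V(\Gamma)-st(v)}$. So every partial conjugation lies in ${\rm Inn}(G_\Gamma)$.

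The second point is combinatorial and is, I think, the conceptual core. Among the conditions a triple $(u,v,w)$ must satisfy to support a nontrivial commutator transvection are: $u$ and $v$ are non-adjacent (else $[g,h]=1$); $w\notin\{u,v\}$; and every neighbour of $w$ is adjacent to both $u$ and $v$ (this being what forces $[g,h]$ to commute with $G_z$ for each neighbour $z$ of $w$). I claim these already imply that $\Gamma$ has a separating star, which contradicts the hypothesis and so rules out such transvections. Indeed, since $u$ is not adjacent to itself, $u$ cannot be a neighbour of $w$; combined with $w\ne u$ this puts $w$ into $V(\Gamma)-st(u)$. Every neighbour of $w$ is adjacent to $u$, hence lies in $st(u)$, so $w$ has no neighbour inside $V(\Gamma)-st(u)$, i.e. $w$ is an isolated vertex of the subgraph spanned by $V(\Gamma)-st(u)$. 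But $v$ is non-adjacent to $u$ and distinct from $u$, so $v$ also lies in $V(\Gamma)-st(u)$, and $v\ne w$. Therefore that subgraph has at least two vertices and contains an isolated one, so it is disconnected and $st(u)$ is a separating star. This contradiction shows that no nontrivial commutator transvection exists, so IA$(G_\Gamma)$ is generated by inner automorphisms; together with ${\rm Inn}(G_\Gamma)\subseteq$ IA$(G_\Gamma)$ this yields IA$(G_\Gamma)={\rm Inn}(G_\Gamma)$, i.e. $G_\Gamma$ is semicomplete.

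I expect the genuine difficulty to lie in securing the input used in the first paragraph once the vertex groups are allowed to be infinitely generated abelian groups: \cite{SaleSusse} establishes the generating set of IA$(G_\Gamma)$ under finiteness hypotheses (which enter, for example, through the semidirect-product description of ${\rm Aut}(\mathbb{Z}^k\times T)$), and removing them -- say by an exhaustion argument over the finitely generated subgroups of the vertex groups, or by a direct normal-form argument inside the graph product -- is exactly what is missing. A secondary, more technical point is the precise bookkeeping of the commutator transvections when some vertex groups carry torsion (an order condition on $[g,h]$, and possibly products of commutators rather than a single one), but this only shrinks the generating family, so the argument of the third paragraph applies unchanged to each commutator that appears. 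With the generating set in hand, the two checks above require no modification, and the characterization follows.
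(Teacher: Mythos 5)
There is a genuine gap, and you have in fact put your finger on it yourself in your final paragraph: the statement you are proving is stated in the paper only as a \emph{Conjecture}, precisely because the generating set of ${\rm IA}(G_\Gamma)$ that your argument rests on is not available for arbitrary abelian vertex groups. The result of \cite{SaleSusse} (Thm.\ 5.1) that ${\rm IA}(G_\Gamma)$ is generated by partial conjugations and commutator transvections is proved for \emph{finitely generated} abelian vertex groups, and the paper's reduction to that setting (Lemma \ref{NoSepStarSIL} and the preceding remark) replaces each vertex group by a clique of cyclic groups of prime power order --- a decomposition that simply does not exist for general abelian groups such as $\mathbb{Q}$ or $\mathbb{Z}(p^\infty)$. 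Your suggested fixes (an exhaustion over finitely generated subgroups, or a direct normal-form argument) are named but not carried out, and neither is routine: an automorphism of $G_\Gamma$ need not restrict to automorphisms of the sub-graph-products built from finitely generated subgroups of the vertex groups, so the exhaustion does not obviously transport the generating-set statement. What you have written is therefore a correct proof of Theorem D (the finitely generated case) together with a reduction of the Conjecture to the open generating-set problem, not a proof of the Conjecture itself.

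On the part that does work, your treatment is essentially the paper's proof of Theorem D, with one small streamlining worth noting: where the paper rules out commutator transvections in two steps (a commutator transvection forces a SIL, Lemma \ref{noCommutatorTransvestions}, and a SIL forces a separating star, Proposition \ref{noSIL}), you argue directly that the combinatorial conditions supporting a commutator transvection produce an isolated vertex in the subgraph spanned by $V(\Gamma)-st(u)$ alongside a second vertex, hence a separating star. That shortcut is valid and correct; it just does not touch the actual obstruction.
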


Using the discription of the automorphism group of $G_\Gamma$ where $\Gamma$ has a special combinatorial structure in \cite[Cor. 8.2]{Genevois} we obtain 
\begin{NewPropositionE}\label{atomic}
Let $G_\Gamma$ be a graph product of groups. Assume that for all vertices $v, w\in V(\Gamma)$, $v\neq w$ we have $lk(v)\nsubseteq st(w)$. The following statements are equivalent:
\begin{enumerate}
\item The group $G_\Gamma$ is semicomplete.
\item All vertex groups are abelian and $\Gamma$ has no separating star.
\end{enumerate}
\end{NewPropositionE}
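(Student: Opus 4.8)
The implication (i)$\Rightarrow$(ii) is the easy direction, and I would obtain it directly from Lemma B. Its first part, read contrapositively, says that a semicomplete $G_\Gamma$ forces $\Gamma$ to have no separating star. For the vertex groups, observe that the hypothesis $lk(v)\nsubseteq st(w)$ prevents $V(\Gamma)$ from being the star of a vertex: if $V(\Gamma)=st(u)$, then for any other vertex $w$ (here one uses that $\Gamma$ has at least two vertices) one would get $lk(w)\subseteq V(\Gamma)=st(u)$, a contradiction. So $V(\Gamma)$ is not the star of a vertex, hence the second part of Lemma B applies and yields that all vertex groups are abelian.

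For (ii)$\Rightarrow$(i) the plan is to feed the hypothesis into the structural description of ${\rm Aut}(G_\Gamma)$ in \cite[Cor. 8.2]{Genevois}. The assumption ``$lk(v)\nsubseteq st(w)$ for all distinct $v,w$'' is exactly the condition that $G_\Gamma$ has no transvections (and no shear automorphisms), which is the combinatorial situation in which that corollary describes ${\rm Aut}(G_\Gamma)$: it is generated by inner automorphisms, partial conjugations, vertex automorphisms (elements of $\prod_v {\rm Aut}(G_v)$), and graph automorphisms. Let $N$ be the subgroup generated by all inner automorphisms and all partial conjugations, and let $R$ be the ``rigid'' subgroup $(\prod_v {\rm Aut}(G_v))\rtimes{\rm Aut}(\Gamma)$ of vertex and graph automorphisms. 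Since the vertex groups are abelian, $G_\Gamma^{ab}=\prod_v G_v$, and both inner automorphisms and partial conjugations act trivially on $G_\Gamma^{ab}$ (because $gxg^{-1}$ and $x$ have the same image in an abelian quotient), so $N\subseteq\ker\Phi=\mathrm{IA}(G_\Gamma)$; moreover $N$ is normalized by the remaining generators, so $N$ is normal and ${\rm Aut}(G_\Gamma)=N\cdot R$. On the other hand $\Phi$ is injective on $R$: a vertex automorphism $(\alpha_v)_v$ followed by a graph automorphism $\sigma$ induces on $\prod_v G_v$ the map taking the factor $G_v$ onto the factor $G_{\sigma(v)}$ via $\alpha_v$, and since every vertex group is non-trivial this is the identity only when $\sigma=\mathrm{id}$ and all $\alpha_v=\mathrm{id}$. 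A routine semidirect-product argument then gives ${\rm Aut}(G_\Gamma)=N\rtimes R$, and hence $\mathrm{IA}(G_\Gamma)=\ker\Phi=N$.

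Finally I would use that $\Gamma$ has no separating star: then for every vertex $v$ the subgraph spanned by $V(\Gamma)-st(v)$ is connected or empty, so the partial conjugations based at $v$ are exactly the maps conjugating $G_{\langle V(\Gamma)-st(v)\rangle}$ by a single element $g\in G_v$; since $G_v$ is abelian, such a map coincides with conjugation of $G_\Gamma$ by $g$, hence is inner. Therefore $N={\rm Inn}(G_\Gamma)$, so $\mathrm{IA}(G_\Gamma)={\rm Inn}(G_\Gamma)$ and $G_\Gamma$ is semicomplete.

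The main obstacle is the first step of (ii)$\Rightarrow$(i): extracting from \cite[Cor. 8.2]{Genevois} the precise form of the decomposition of ${\rm Aut}(G_\Gamma)$, and making sure that the rigid part $R$ is a genuine retract (so that an $\mathrm{IA}$-automorphism may be projected onto $R$ and the projection shown to vanish). The remaining points---that inner automorphisms and partial conjugations lie in $\mathrm{IA}(G_\Gamma)$, that $\Phi$ is injective on $R$ (which is where both the abelian-ness and the non-triviality of the vertex groups enter), and that partial conjugations are inner in the absence of a separating star---are routine.
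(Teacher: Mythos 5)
Your proposal is correct and follows essentially the same route as the paper: Lemma B (Lemmata \ref{hasSeparatingStar} and \ref{abelian}) for (i)$\Rightarrow$(ii), and the generating set from \cite[Cor.~8.2]{Genevois} for (ii)$\Rightarrow$(i), with partial conjugations becoming inner in the absence of a separating star. The only difference is that where you work out by hand the semidirect-product decomposition ${\rm Aut}(G_\Gamma)=N\rtimes R$ and the injectivity of $\Phi$ on the rigid part, the paper simply cites \cite[Lemma~4.1]{GenevoisMartin} for the fact that graph and local automorphisms do not lie in $\ker(\Phi)$, so the kernel consists only of partial conjugations.
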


\subsection*{Acknowledgment} 
We want to thank Anthony Genevois for useful comments on the previous version of this paper and the referee for many helpful remarks. We are also grateful to Dominic Enders for having communicated to
us a proof idea of Proposition \ref{noSIL}.
%%%%%%%%%%%%%%%%%%%%%%%%%%%%%%%%%%%%%%%%%%%%%%%%%%%%%%%%%%%%%%%%%%%%%%%%%%%%%%%%%%%%%%%%%%%%%%%%%%%%%%%%%%%%%%%%%%%%%%%%%%%%%%%%%%%%%%%%%%%%%%%%%%%%%%%%%%
\section{Semicomplete graph products of groups}

\subsection{Characteristic subgroups}

We begin by recalling the concept of characteristic subgroups.
\begin{definition}
	Let $G$ be a group and $N\subseteq G$ be a subgroup. The group $N$ is called \emph{characteristic} in $G$ if for every  $f\in{\rm Aut}(G)$ the equality $f(N)=N$ holds.
\end{definition}

Given a group $G$, by definition the \emph{commutator subgroup} of $G$, denoted by $G'$, is the group generated by the elements $ghg^{-1}h^{-1}$ for $g, h\in G$. Note that $G'$ is normal in $G$ and the quotient $G/G'$ is abelian and is called the \emph{abelianization} of $G$ that we denote by $G^{ab}:=G/G'$.
For example, the abelianization of the free group $F_n$ is the free abelian group $\mathbb{Z}^n$.

\begin{examples}
	Let $G$ be a group.
	\begin{enumerate}
		\item The commutator subgroup $G'$ is characteristic in $G$.
		\item The center of $G$, denoted by $Z(G):=\left\{h\in G\mid hg=gh \text{ for all }g\in G\right\}$, is characteristic in $G$.		
	\end{enumerate}
\end{examples}

Let $G$ be a group and $N$ be a characteristic subgroup. The map $\Psi\colon{\rm Aut}(G)\to{\rm Aut}(G/N)$ defined as follows: $\Psi(f)(xN)=f(x)N$ for $f\in{\rm Aut}(G)$ and $x\in G$ is a well-defined group homomorphism. 

In particular, for a group $G$ we have a group homomorphism 
$\Phi\colon{\rm Aut}(G)\to{\rm Aut}(G^{ab}).$ 
Clearly, $\Phi$ is in general not surjective. Nevertheless, we have always a short exact sequence 
$$\left\{id\right\}\to \ker(\Phi)\to{\rm Aut}(G)\to {\rm im}(\Phi)\to\left\{id\right\}.$$
The subgroup of inner automorphisms of $G$ is always contained in $\ker(\Phi)$. Following \cite{Bachmuth} we denote the kernel of $\Phi$ by IA$(G)$, so ${\rm Inn}(G)\subseteq{\rm IA}(G)$.

For a free group $F_2$ Nielsen showed in \cite{Nielsen} that ${\rm IA}(F_2)={\rm Inn}(F_2)$ and that the map $\Phi$ is also surjective, hence we have the following short exact sequence
$$\left\{1\right\}\to F_2\to{\rm Aut}(F_2)\to{\rm GL}_2(\mathbb{Z})\to\left\{1\right\}.$$

Here we are interested in conditions on the group $G$ such that the automorphism group of $G$ is rigid in the sense that  ${\rm IA}(G)={\rm Inn}(G)$.	

%%%%%%%%%%%%%%%%%%%%%%%%%%%%%%%%%%%%%%%%%%%%%%%%%%%%%%%%%%%%%%%%%%%%%%%%%%%%%%%%%%%%%%%%%%%%%%%%%%%%%%%%%%%%%%%%%%%%%%%%%%%%%%%%%%%%%%%%%%%%%%%%%%%%%%%%%%

\subsection{Semicomplete groups}

\begin{definition}
	Let $G$ be a group and $\Phi\colon{\rm Aut}(G)\to{\rm Aut}(G^{ab})$ be the natural group homomorphism induced by the abelianization of $G$.
	A group $G$ is called \emph{semicomplete} if $\ker(\Phi)={\rm Inn}(G)$.
\end{definition}

Here we investigate semicompleteness of infinite groups that are defined via vertex labeled graphs: \emph{graph products of groups}. An example of such a group is a free product of two groups. The following result was proven by Andreadakis using a characterization of subgroups in free products given by Kurosh subgroup theorem \cite{Kurosh}. 

\begin{theorem}(\cite[Thm. 2, Thm. 3]{Andreadakis})
	Let $G=A*B$ be a free product of two non-trivial groups $A$ and $B$. The group $G$ is semicomplete if and only if $A$ and $B$ are both abelian.
\end{theorem}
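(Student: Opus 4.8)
The plan is to prove the two directions separately. For the ``only if'' direction, I would argue by contrapositive: suppose without loss of generality that $A$ is non-abelian, and produce an element of $\mathrm{IA}(G)$ that is not inner. Pick $a \in A$ with $a \notin Z(A)$, so there is $a' \in A$ with $[a,a'] \neq 1$. The idea is to define $f \colon G \to G$ by conjugating $A$ by a fixed element of $B$ while fixing $B$ pointwise: choose a non-trivial $b \in B$ and set $f|_A = \mathrm{conj}_b|_A$ (that is, $f(x) = bxb^{-1}$ for $x\in A$) and $f|_B = \mathrm{id}_B$. By the universal property of the free product this defines an endomorphism, and it is an automorphism with inverse built from $\mathrm{conj}_{b^{-1}}$ on $A$. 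Since $A$ and $B$ generate $G$ and $f$ acts on each factor by an inner automorphism of $G$, $f$ induces the identity on $G^{ab} = A^{ab} \times B^{ab}$, so $f \in \mathrm{IA}(G)$. It remains to check $f$ is not inner: if $f = \mathrm{conj}_g$ for some $g \in G$, then $g^{-1}b$ centralizes $A$ and $g$ centralizes $B$; in a non-trivial free product the centralizer of a non-abelian subgroup $A$ is trivial (this uses the normal form / the structure of centralizers in free products), forcing $g^{-1}b = 1$, i.e. $g = b \in B$, but then $g$ cannot centralize all of $B$ unless $B$ is abelian and $b \in Z(B)$ — and even then $\mathrm{conj}_b = \mathrm{id}$ on $B$ is fine, so one must instead observe $g=b$ does not centralize $A$ pointwise in the needed way: more carefully, $\mathrm{conj}_b$ restricted to $A$ being an automorphism of $A$ that arises as conjugation by $b \in B$ means $bAb^{-1} = A$ inside $G$, which in a free product forces $b \in A \cap B = \{1\}$, a contradiction with $b \neq 1$. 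So in fact the cleaner route is: $f$ inner would give $bAb^{-1}=A$ in $G$, impossible for $b\in B\setminus\{1\}$, $A\neq\{1\}$.

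For the ``if'' direction, assume $A$ and $B$ are both abelian and let $f \in \mathrm{IA}(G)$; the goal is to show $f$ is inner. Here I would invoke the Kurosh subgroup theorem, exactly as Andreadakis does. The key structural facts are: (1) any finite subgroup, and more generally any subgroup with no non-trivial free-product or free part, of $A \ast B$ is conjugate into $A$ or into $B$; and (2) the images $f(A)$ and $f(B)$ are subgroups isomorphic to $A$ and $B$ respectively, hence (since $A, B$ are abelian, in particular freely indecomposable and without free subgroups) each is conjugate to a subgroup of a conjugate of $A$ or of $B$. Counting ranks / using that $f$ is an automorphism, $f(A)$ must be conjugate to $A$ itself (not a proper subgroup, and not inside $B$, since $f$ induces an isomorphism on abelianizations and the $A$- and $B$-coordinates are detected there), and likewise $f(B)$ is conjugate to $B$. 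So there are $g, h \in G$ with $f(A) = gAg^{-1}$ and $f(B) = hBh^{-1}$. Composing $f$ with the inner automorphism $\mathrm{conj}_{g^{-1}}$, we may assume $f(A) = A$, so $f|_A \in \mathrm{Aut}(A)$; since $f \in \mathrm{IA}(G)$ and $A$ is abelian, $f|_A$ induces the identity on $A = A^{ab}$, hence $f|_A = \mathrm{id}_A$. Then $f(B) = hBh^{-1}$ for some $h$, and the normalizer condition combined with $A \cap hBh^{-1}$-considerations pins down $h$ modulo the centralizer of $B$; arguing again with the action on $G^{ab}$ forces $f|_B = \mathrm{conj}_h|_B$ with $h$ that can be taken trivial after a further harmless adjustment, giving $f = \mathrm{id}$, i.e. the original $f$ was $\mathrm{conj}_g$.

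The main obstacle is the bookkeeping in the ``if'' direction: the Kurosh theorem only gives $f(A)$ and $f(B)$ up to conjugacy as subgroups of conjugates of the factors, and one must rule out the ``crossed'' possibilities ($f(A)$ landing inside a conjugate of $B$) and the ``proper subgroup'' possibilities, then show the two conjugating elements $g$ and $h$ can be reconciled into a single inner automorphism. This is where the hypothesis that $f$ lies in $\mathrm{IA}(G)$ — rather than just being an arbitrary automorphism — does the real work: passing to $G^{ab} = A \times B$ shows $f$ fixes the $A$- and $B$-summands setwise and acts as the identity on each, which prevents any mixing and forces the conjugating elements to act trivially on the relevant factor, collapsing everything to a single conjugation. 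I would present this carefully but cite \cite{Kurosh} for the subgroup theorem and \cite{Andreadakis} for the original argument, keeping the exposition at the level of identifying which normal-form facts about free products are being used.
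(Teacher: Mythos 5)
The paper does not prove this statement; it is quoted as \cite[Thm.~2, Thm.~3]{Andreadakis}, with the remark that the proof rests on the Kurosh subgroup theorem. So your attempt can only be measured against the standard argument, and there it has a genuine error in the ``only if'' direction. The automorphism you build --- $f|_A=\mathrm{conj}_b|_A$ for some $1\neq b\in B$ and $f|_B=\mathrm{id}_B$ --- is inner precisely when $b\in Z(B)$: in that case $\mathrm{conj}_b$ itself fixes $B$ pointwise and agrees with $f$ on $A$, so $f=\mathrm{conj}_b$. Since this direction must handle the case where $A$ is non-abelian but $B$ is abelian, your construction produces an inner automorphism there for \emph{every} choice of $b$, and witnesses nothing. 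Your attempted repair is also a non sequitur: if $f=\mathrm{conj}_g$ then $f(A)=gAg^{-1}$, and since $f(A)=bAb^{-1}$ by construction, one only learns $g^{-1}b\in N_G(A)$, not $bAb^{-1}=A$; taking $g=b$ satisfies this trivially. The correct construction --- and the one this paper itself uses in the analogous Lemma~\ref{abelian} --- is to conjugate $A$ by a non-central element $a\in A-Z(A)$ while fixing $B$. Then $f=\mathrm{conj}_g$ forces $g\in C_G(B)=Z(B)\subseteq B$ and $a^{-1}g\in C_G(A)=Z(A)\subseteq A$, whence $g\in A\cap B=\{1\}$ and $a\in Z(A)$, a contradiction. (You picked such an $a$ at the start and then never used it.)

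The ``if'' direction is the right outline of Andreadakis' Kurosh-based argument, but two of the steps you flag as ``bookkeeping'' are the actual mathematical content. First, the parenthetical ``abelian, in particular \ldots without free subgroups'' is false ($\Z$ is free of rank one and sits inside most abelian groups); when a factor is infinite cyclic, Kurosh allows $f(A)$ to be an infinite cyclic subgroup that is \emph{not} conjugate into either factor (e.g.\ $\langle aba^{-1}b^{-1}\rangle$ in $\Z*\Z$), and ruling this out is essentially Nielsen's theorem for $F_2$, which is not a formality. Second, reconciling the two conjugators $g$ and $h$ into a single inner automorphism is where normal forms in the free product must genuinely be used. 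Citing \cite{Kurosh} and \cite{Andreadakis} for these is legitimate, but as written the proposal should not present them as routine.
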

Before giving a definition of a graph product of groups we need to recall some facts about simplicial graphs.

%%%%%%%%%%%%%%%%%%%%%%%%%%%%%%%%%%%%%%%%%%%%%%%%%%%%%%%%%%%%%%%%%%%%%%%%%%%%%%%%%%%%%%%%%%%%%%%%%%%%%%%%%%%%%%%%%%%%%%%%%%%%%%%%%%%%%%%%%%%%%%%%%%%%%%%%%%
\subsection{Simplicial graphs}
We first recall a few definitions and some important features of simplicial graphs, following \cite{Diestel}. In our setting, graphs are always without loops and multiple edges and they are always finite. 

Let $V$ denote a non-empty set. A \textit{simplicial graph} $\Gamma$ is a pair $\Gamma=(V,E)$ where $E$ is a set of $2$-element subsets of $V$. The elements of $V$ are called the \textit{vertices} of $\Gamma$ and the elements of $E$ are the \textit{edges} of $\Gamma$. We say that two vertices $v, w$ are \emph{adjacent} if $\left\{v,w\right\}$ is an edge. A simplicial graph $\Gamma$ is called \emph{finite} if the cardinality of $V$ is finite. Given a graph $\Gamma$ with the vertex set $V(\Gamma)$ and the edge set $E(\Gamma)$, for a vertex $v\in V(\Gamma)$, we define the \emph{link of $v$} as follows $lk(v):=\left\{w\in V(\Gamma)\mid \left\{v,w\right\}\in E(\Gamma)\right\}$ and the \emph{star of $v$} is defined as $st(v):=\left\{v\right\}\cup lk(v)$. 
A graph $\Gamma$ is called \emph{clique} if $E(\Gamma)$ contains an edge for every pair of vertices in $V(\Gamma)$.

If $V'\subseteq V$ and $E'\subseteq E$ and $E'$ is a set of 2-element subsets of $V'$, then $\Gamma'=(V', E')$ is called a \emph{subgraph} of $\Gamma$. If $\Gamma'$ is a subgraph of $\Gamma$ and $E'$ contains all the edges $\left\{v, w\right\}\in E$ with $v, w\in V'$, then $\Gamma'$ is called an \emph{induced subgraph} of $\Gamma$. Sometimes we also say that $\Gamma'$ is spanned by the vertex set $V'$. 
A {\it path} is a graph $P_n=(V, E)$ of the form  $V=\left\{v_0, \ldots, v_n\right\}\text{ and }E=\left\{\left\{v_0, v_1\right\}, \left\{v_1, v_2\right\}, \ldots, \left\{v_{n-1}, v_n\right\}\right\}$ where the $v_i$, $0\leq i\leq n$, are pairwise distinct. We say that $P_n$ is a path between $v_0$ and $v_n$. A graph $\Gamma$ is called \emph{connected} if there is always a path between $v$ and $w$ for $v, w\in V(\Gamma)$. An induced subgraph $\Delta$ of $\Gamma$ is called a \emph{connected component} if $\Delta$ is connected and is maximal with respect to inclusion. 
We say that $\Gamma$ has a \emph{separating star (for a vertex $v\in V(\Gamma)$)} if the subgraph spanned by the vertex set $V(\Gamma)-st(v)$ is disconnected. Given a graph $\Gamma$, a \emph{graph automorphism} of $\Gamma$ is a map $f\colon V(\Gamma)\to V(\Gamma)$ such that $\left\{f(v), f(w)\right\}$ is an edge if and only if $\left\{v,w\right\}$ is an edge. A graph is called \emph{asymmetric} if any graph automorphism of $\Gamma$ is trivial.

It was proven in \cite{ErdosRenyi} that an asymmetric graph with more than one vertex has at least $6$ vertices and an asymmetric graph with $6$ vertices is isomorphic to one of the eight graphs in Figure 4. 

Note that an asymmetric graph with 6 vertices always has a separating star 
(consider the stars of empty vertices in Figure 4). 

\newpage
\begin{figure}[h]
	\begin{center}
		\begin{tikzpicture}[scale=1.1]
			\draw (0,0)--(4,0);
			\draw (1,0)--(1.5,1);
			\draw (1.5,1)--(2,0);
			\draw[fill=black]  (0,0) circle (2pt);
			\draw[fill=black]  (1,0) circle (2pt);		
			\draw[fill=white]  (2,0) circle (2pt);
			\draw[fill=black]  (3,0) circle (2pt);		
			\draw[fill=black]  (4,0) circle (2pt);
			\draw[fill=black]  (1.5,1) circle (2pt);

			\draw (6,-1)--(6,1);
			\draw (7,-1)--(7,1);
			\draw (6,1)--(7,1);
			\draw (6,0)--(7,0);
			\draw (6,-1)--(7,0);
			\draw[fill=white] (6,0) circle (2pt);
			\draw[fill=black] (7,0) circle (2pt);
			\draw[fill=black] (6,1) circle (2pt);
			\draw[fill=black] (7,1) circle (2pt);
			\draw[fill=black] (6,-1) circle (2pt);
			\draw[fill=black] (7,-1) circle (2pt);

			\draw (9,-1)--(9,1);
			\draw (9,0)--(10,0);
			\draw (10,0)--(10,1);
			\draw (9,1)--(10,1);
			\draw (9,1)--(9.5,2);
			\draw (10,1)--(9.5,2);
			\draw[fill=black] (9,0) circle (2pt);
			\draw[fill=black] (10,0) circle (2pt);
			\draw[fill=white] (9,1) circle (2pt);
			\draw[fill=black] (10,1) circle (2pt);
			\draw[fill=black] (9,-1) circle (2pt);
			\draw[fill=black] (9.5,2) circle (2pt);
			
			\draw (12,-1)--(12,1);
			\draw (13,-1)--(13,1);
			\draw (12,0)--(13,0);
			\draw (12,1)--(13,1);
			\draw (12,1)--(13,0);
			\draw[fill=black] (12,0) circle (2pt);
			\draw[fill=black] (13,0) circle (2pt);
			\draw[fill=white] (12,1) circle (2pt);
			\draw[fill=black] (13,1) circle (2pt);
			\draw[fill=black] (12,-1) circle (2pt);
			\draw[fill=black] (13,-1) circle (2pt);
			
			\draw (0,-3)--(0,-5);
			\draw (1,-3)--(1,-5);
			\draw (0,-3)--(1,-3);
			\draw (0,-4)--(1,-4);
			\draw (0,-5)--(1,-4);
			\draw (0,-4)--(1,-3);
			\draw[fill=black] (0,-3) circle (2pt);
			\draw[fill=black] (0,-4) circle (2pt);
			\draw[fill=black] (0,-5) circle (2pt);
			\draw[fill=white] (1,-3) circle (2pt);
			\draw[fill=black] (1,-4) circle (2pt);
			\draw[fill=black] (1,-5) circle (2pt);

			\draw (3,-5)--(3,-3);
			\draw (4,-5)--(4,-3);
			\draw (3,-3)--(4,-3);
			\draw (3,-4)--(4,-4);
			\draw (3,-5)--(4,-5);
			\draw (3,-4)--(4,-3);
			\draw[fill=black] (3,-3) circle (2pt);
			\draw[fill=black] (3,-4) circle (2pt);
			\draw[fill=black] (3,-5) circle (2pt);
			\draw[fill=black] (4,-3) circle (2pt);
			\draw[fill=white] (4,-4) circle (2pt);
			\draw[fill=black] (4,-5) circle (2pt);

			\draw (6,-5)--(6,-3);
			\draw (6,-3)--(7,-3);
			\draw (6,-4)--(7,-4);
			\draw (6,-5)--(7,-5);
			\draw (6,-4)--(7,-3);
			\draw (6,-5)--(7,-4);
			\draw (7,-3)--(7,-4);
			\draw[fill=black] (6,-3) circle (2pt);
			\draw[fill=black] (6,-4) circle (2pt);
			\draw[fill=black] (6,-5) circle (2pt);
			\draw[fill=black] (7,-3) circle (2pt);
			\draw[fill=white] (7,-4) circle (2pt);
			\draw[fill=black] (7,-5) circle (2pt);

			\draw (9,-3)--(11,-3);
			\draw (9,-5)--(11,-5);
			\draw (9,-3)--(9,-5);
			\draw (11,-3)--(11,-5);
			\draw (9,-5)--(11,-3);
			\draw (9,-5)--(9.5, -3.5);
			\draw (9.5, -3.5)--(11,-5);
			\draw[fill=black] (9,-3) circle (2pt);
			\draw[fill=black] (11,-3) circle (2pt);
			\draw[fill=black] (9,-5) circle (2pt);
			\draw[fill=white] (11,-5) circle (2pt);	
			\draw[fill=black] (10,-4) circle (2pt);
			\draw[fill=black] (9.5,-3.5) circle (2pt);
		
		\end{tikzpicture}
	\caption{A characterization of asymmetric graphs with 6 vertices.}
	\end{center}
\end{figure}
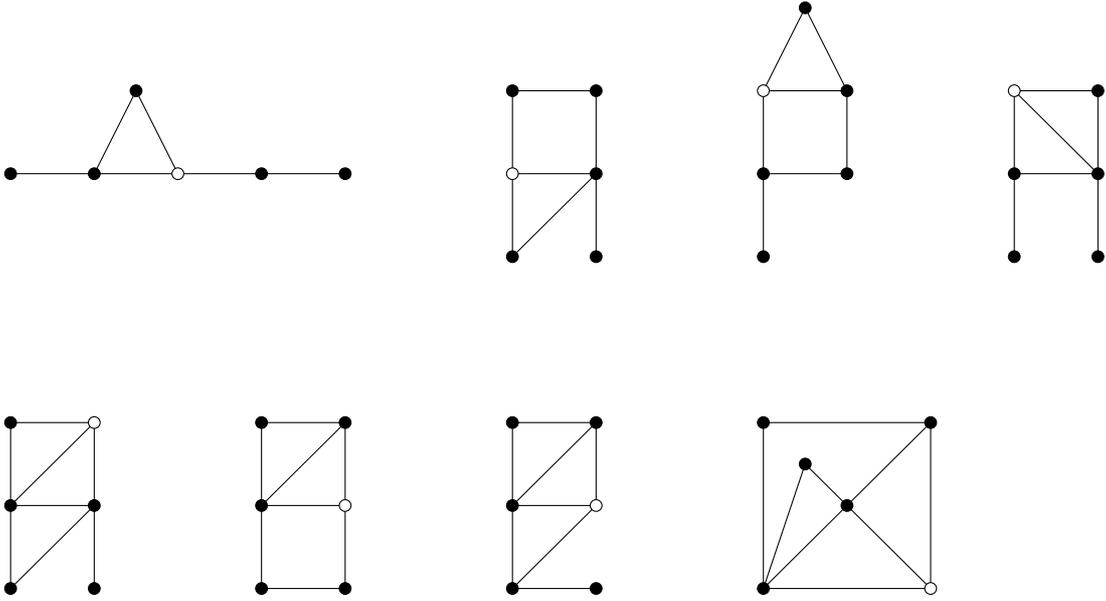

\begin{definition}\label{SIL}
		Given a graph $\Gamma=(V,E)$.
		 A \textit{seperating intersection of links (SIL)} is a triple of vertices $(x,y \mid z)$ in $\Gamma$ that are pairwise non-adjacent and such that the connected component of the subgraph spanned by  $V(\Gamma)-(lk(x)\cap lk(y))$ containing $z$ does not contain either $x$ or $y$. 
\end{definition}

Let us consider the graph in Figure 5. 
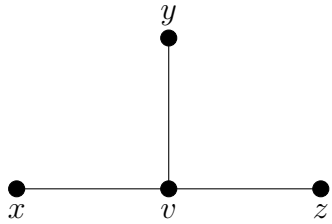
\begin{figure}[h]
	\begin{center}
		\begin{tikzpicture}
			\draw[fill=black]  (0,0) circle (3pt);
			\draw[fill=black]  (2,0) circle (3pt);
			\draw[fill=black]  (4,0) circle (3pt);	
			\draw[fill=black]  (2,2) circle (3pt);	
			\node at (0,-0.3) {$x$}; 
			\node at (2,-0.3) {$v$};
			\node at (4,-0.3) {$z$};
			\node at (2,2.3) {$y$};	
			\draw (0,0)--(2,0);	
			\draw (2,0)--(4,0);	
			\draw (2,0)--(2,2);	
		\end{tikzpicture}
	\caption{A graph $\Gamma_1$ with a SIL and a separating star.}
	\end{center}
\end{figure}

The graph induced by the vertex set $V(\Gamma_1)-lk(x)\cap lk(y)=\left\{x,y,v,z\right\}-\left\{v\right\}=\left\{x, y, z\right\}$ has three connected components each of them is a single vertex. Hence $(x,y\mid z)$ is a SIL. Further the graph in Figure 5 has a separating star for the vertex $x$.

Let us now discuss the graph in Figure 6.
\begin{figure}[h]
	\begin{center}
		\begin{tikzpicture}
			\draw[fill=black]  (0,0) circle (3pt);
			\draw[fill=black]  (2,0) circle (3pt);
			\draw[fill=black]  (4,0) circle (3pt);	
			\draw[fill=black]  (6,0) circle (3pt);	
			\draw[fill=black]  (8,0) circle (3pt);
			\draw[fill=black]  (0,2) circle (3pt);
			\draw[fill=black]  (2,2) circle (3pt);	
			\draw[fill=black]  (4,2) circle (3pt);	
			\draw[fill=black]  (6,2) circle (3pt);	
			\draw[fill=black]  (8,2) circle (3pt);
			\draw (0,0)--(8,0);	
			\draw (0,2)--(8,2);	
			\draw (0,0)--(0,2);	
			\draw (2,0)--(2,2);	
			\draw (4,0)--(4,2);	
			\draw (6,0)--(6,2);	
			\draw (8,0)--(8,2);
			\node at (4,-0.3) {$v$};
		\end{tikzpicture}
	\caption{A graph $\Gamma_2$ with no SIL and a separating star.}
	\end{center}
\end{figure}
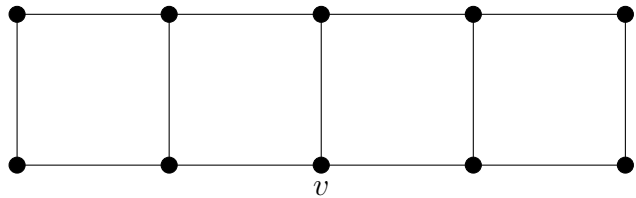

It easy to verify that the graph $\Gamma_2$ in Figure 6 does not have a SIL, but $\Gamma_2$ has a separating star for a vertex $v$, see \cite[p. 255]{Charney}.

For us the following observation about separating stars and SILs is crucial.
\begin{proposition}\label{noSIL}
	If $\Gamma$ has no separating star, then $\Gamma$ has no SIL.
\end{proposition}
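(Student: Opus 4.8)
The plan is to prove the contrapositive: assuming $\Gamma$ has a SIL, I will produce a vertex whose star separates $\Gamma$. So suppose $(x,y \mid z)$ is a SIL, meaning $x,y,z$ are pairwise non-adjacent and the connected component $C$ of the subgraph spanned by $V(\Gamma) - (lk(x)\cap lk(y))$ that contains $z$ contains neither $x$ nor $y$. Write $I := lk(x)\cap lk(y)$ for the common link. The key observation is that $I$ is exactly the vertex set of a clique-like "bottleneck": every path in $\Gamma$ from $z$ to $x$ (or to $y$) must pass through $I$, since removing $I$ disconnects $z$ from both $x$ and $y$. I want to upgrade this to: removing $st(w)$ for a single well-chosen vertex $w$ already disconnects $\Gamma$.

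First I would try the naive choice and see what goes wrong. If $I = lk(x) \cap lk(y)$ happened to be a single vertex $w$, then $V(\Gamma) - st(w) \subseteq V(\Gamma) - \{w\} = V(\Gamma) - I$ up to the issue that $st(w)$ also removes $lk(w)$; but $lk(w) \subseteq lk(x) \cup lk(y)$ need not hold, so this needs care. The cleaner route: among the vertices of $I$, I claim at least one of them, say $w$, has the property that $C - st(w)$ is still nonempty (it contains $z$, provided $z \notin st(w)$, i.e. provided $z$ is not adjacent to $w$ — but $z$ could be adjacent to some vertices of $I$!). This is the crux. Let me instead argue at the level of the component $C$. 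Consider the set $N_C := \{ v \in I \mid v \text{ is adjacent to some vertex of } C\}$, the vertices of $I$ "touching" $C$. Every path from $C$ to $x$ or $y$ leaves $C$ through $N_C$. Now pick any $w \in N_C$ with a neighbor in $C$. The subgraph on $V(\Gamma) - st(w)$ contains: on one side, the vertices of $C - st(w)$ adjacent-path-connected to $z$ (nonempty if $z \notin st(w)$; if $z \in st(w)$, replace $z$ throughout the argument by a vertex $z'$ of $C$ adjacent to $w$ — then $z' \notin st(w)$ is false too, so one must take $z'$ at distance exactly $2$ from $w$ inside $C$, which exists since $C$ is connected and not contained in $st(w)$ unless $C \subseteq st(w)$). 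So the real case analysis is whether $C \subseteq st(w)$ for every choice of $w \in N_C$.

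The main obstacle, then, is handling the case where $C$ is "small," i.e. contained in the star of some bottleneck vertex. I would resolve it as follows: if $C \subseteq st(w)$ for the chosen $w \in I$, note $w$ is adjacent to every vertex of $C$ (being in $I = lk(x)\cap lk(y)$, $w$ is not equal to $x,y,z$ and $w \notin C$). Then I claim $st(w)$ itself separates: the set $\{x\}$ (or a component of $V(\Gamma)-(lk(x)\cap lk(y))$ containing $x$) and the set $C$ lie in different components of $V(\Gamma) - st(w)$ — wait, $C \subseteq st(w)$ means $C$ is gone. Better: use that $x, y \notin st(w)$? No — $w \in lk(x)$ means $x \in st(w)$. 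This shows $w \in I$ is the wrong vertex to star. Instead, star the vertex $z$ itself, or a vertex adjacent to $z$: since $z$ is not adjacent to $x$ or $y$, and any $x$–$z$ path meets $I$, consider whether $st(z)$ separates — generically it won't either. The correct final move, which I would look up or reconstruct from \cite[p.~255]{Charney} and \cite{Genevois}, is: take $w$ to be a vertex in $I$ that is \emph{not} adjacent to $z$ if one exists, giving $z \in V(\Gamma)-st(w)$ in a component missing $x$; and if every vertex of $I$ is adjacent to $z$, then $I \subseteq lk(z)$, so $lk(x) \cap lk(y) \subseteq lk(z)$, and one runs the SIL-type argument with $z$ playing a distinguished role to locate a separating star directly. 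I expect the bookkeeping of these subcases — ensuring the witnessed component genuinely omits $x$ (or $y$) after deleting a full star rather than just $I$ — to be the entire content of the proof; the rest is the observation that $I$ is a separator inherited from the SIL.
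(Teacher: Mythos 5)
Your overall strategy (prove the contrapositive: a SIL yields a separating star) is the right one and matches the paper's, but the proposal never lands the one idea that makes the proof work, and it ends without a complete argument. You spend the proposal searching for the separating vertex among the ``bottleneck'' $I = \lk(x)\cap \lk(y)$, then among $z$ and its neighbours, and you correctly observe that every $w\in I$ fails (since $w$ is adjacent to both $x$ and $y$, deleting $st(w)$ deletes $x$ and $y$, so they cannot witness the separation). What you never try is the vertex $x$ itself. That is the entire proof: since $x,y,z$ are pairwise non-adjacent, both $y$ and $z$ survive in $V(\Gamma)-st(x)$; and every path from $z$ to $y$ must pass through $\lk(x)\cap\lk(y)$ (because $z$ lies in a component of $V(\Gamma)-(\lk(x)\cap\lk(y))$ containing neither $x$ nor $y$), and $\lk(x)\cap\lk(y)\subseteq st(x)$. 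Hence $y$ and $z$ lie in distinct components of $V(\Gamma)-st(x)$, so $st(x)$ separates. No case analysis on whether $C\subseteq st(w)$ or whether $z$ is adjacent to vertices of $I$ is needed.

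Two further points. First, your closing sentences explicitly defer the remaining subcases to ``look up or reconstruct'' from the literature and concede that the bookkeeping is unresolved; as written the proposal therefore does not constitute a proof. Second, the paper also disposes of the disconnected case separately (no separating star forces at most two components, each a clique, which cannot contain three pairwise non-adjacent vertices); your proposal does not address this, although with the $st(x)$ argument above connectivity is not actually essential, so this is a minor omission compared to the missing central step.
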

\begin{proof}
	  By assumption $\Gamma$ has no separating star, hence $\Gamma$ has at most two connected components. If $\Gamma$ has two connected components and no separating star, then each of the connected components is a clique. Therefore a graph with two connected components and no separating star does not have a SIL. 
	
	  Now let $\Gamma$ be connected.
	We show the contra-positive, that is if $\Gamma$ has a SIL $(x,y\mid z)$, then $\Gamma$ has a separating star.

	 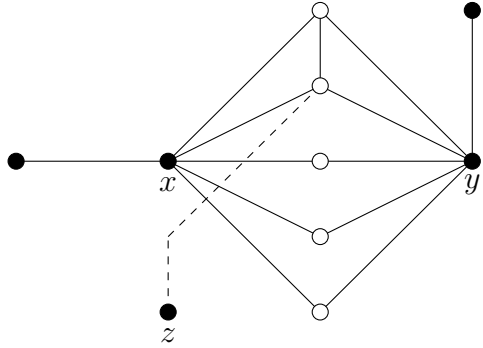
\begin{figure}[h]
	 	\begin{center}
	 	\captionsetup{justification=centering}
	 		\begin{tikzpicture}
	 			\draw (0,0)--(-2,0);
	 			\draw (0,0)--(2,2);
	 			\draw (2,2)-- (2,1);
	 			\draw (0,0)--(2,1);
	 			\draw (0,0)--(2,0);
	 			\draw (4,0)--(4,2);
	 			\draw (4,0)--(2,1);
	 			\draw (4,0)--(2,2);
	 			\draw (4,0)--(2,0);
	 			\draw (4,0)--(2,-1);
	 			\draw (4,0)--(2,-2);
	 				\draw (0,0)--(2,-1);	
	 			\draw (0,0)--(2,-2);
	 			\draw[dashed] (0,-2)--(0,-1);
	 			\draw[dashed] (0,-1)--(2,1); 
	 			
	 			\draw[fill=black]  (0,0) circle (3pt);
	 			\draw[fill=white] (2,0) circle (3pt);
	 			\draw[fill=black]  (4,0) circle (3pt);	
	 			\draw[fill=white]  (2,2) circle (3pt);	
	 			\draw[fill=white]  (2,-2) circle (3pt);
	 			\draw[fill=black]  (0,-2) circle (3pt);
	 			\draw[fill=white]  (2,1) circle (3pt);
	 			\draw[fill=white]  (2,-1) circle (3pt);
	 			
	 			\draw[fill=black]  (-2,0) circle (3pt);
	 			\draw[fill=black]  (4,2) circle (3pt); 	
	 			\node at (0,-0.3) {$x$};
	 			\node at (4,-0.3) {$y$};
	 				\node at (0,-2.3) {$z$};
	 		\end{tikzpicture}
 		\caption{A graph $\Gamma$ with a SIL $(x,y | z)$.}
	 	\end{center}
	 \end{figure}

The vertices $x,y,z$ in the graph $\Gamma$ in Figure 7 form a SIL $(x,y | z)$ and the empty vertices are precisely $lk(x)\cap lk(y)$.
 
 Since $\Gamma$ is connected there exists a path between $z$ and $x$ and a path between $z$ and $y$. But each path between $z$ and $x$ (resp. $z$ and $y$ )  must contain a vertex in $lk(x)\cap lk(y)$, because $z$ is contained in a connected component $C$ of the subgraph induced by the vertex set $V(\Gamma)-lk(x)\cap lk(y)$ and $x\notin V(C)$ and $y\notin V(C)$.
	 
	 Now we claim that $\Gamma$ has a separating star for the vertex $x$. 
	 The subgraph induced by the vertex set $V(\Gamma)-st(x)$ has at least two connected components: one connected component has $y$ as a vertex and the other component has $z$ as a vertex, since each path between $z$ and $y$ in $\Gamma$ has a vertex in $lk(x)\cap lk(y)\subseteq st(x)$. Hence $\Gamma$ has a separating star.
\end{proof}

%%%%%%%%%%%%%%%%%%%%%%%%%%%%%%%%%%%%%%%%%%%%%%%%%%%%%%%%%%%%%%%%%%%%%%%%%%%%%%%%%%%%%%%%%%%%%%%%%%%%%%%%%%%%%%%%%%%%%%%%%%%%%%%%%%%%%%%%%%%%%%%%%%%%%%%%%%
\subsection{Semicomplete graph products of groups}

In this note our focus is on semicompleteness of groups which are constructed from abelian groups.
\begin{definition}
 Given a finite simplicial graph $\Gamma$ and a collection of non-trivial groups $\{ G_v \mid v \in V(\Gamma) \}$ indexed by the vertex-set $V(\Gamma)$ of $\Gamma$, the \emph{graph product} $G_\Gamma$ is defined as the quotient
$$ \left( \underset{v \in V(\Gamma)}{\ast} G_v \right) / \langle \langle [g,h]=1, \ g \in G_v, h \in G_w, \{v,w\} \in E(\Gamma) \rangle \rangle$$
where $E(\Gamma)$ denotes the edge-set of $\Gamma$. 
\end{definition}
 
In order to prove that many graph products of groups are not semicomplete  we need a precise definition of some elements of ${\rm Aut}(G_\Gamma)$. 

\begin{definition}
	Let $G_\Gamma$ be a graph product, $v\in V(\Gamma)$, $x\in G_v$, $x\neq 1$ and $C=(V', E')$ be a connected component of a subgraph spanned by $V(\Gamma)-st(v))$. The \emph{partial conjugation $\pi_{v,x, C}$} is the automorphism of $G_\Gamma$  induced by: 
	\[h\mapsto \left\{
	\begin{array}{ll}
		xhx^{-1} & \text{if }h\in G_w, w\in V' \\
		h & \text{if }h\in G_w, w\notin V'.  \\
	\end{array}
	\right. \]
\end{definition}
We note that an inner automorphism is always a product of partial conjugations.

\begin{lemma}\label{hasSeparatingStar}
	Let $G_\Gamma$ be a graph product of groups. If $\Gamma$ has a separating star, then $G_\Gamma$ is not semicomplete.
\end{lemma}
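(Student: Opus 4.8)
The plan is to exhibit an explicit element of ${\rm IA}(G_\Gamma)$ that is not inner, namely a partial conjugation supported on a single connected component of the subgraph spanned by $V(\Gamma)-st(v)$. Suppose $\Gamma$ has a separating star for a vertex $v$, so that the subgraph spanned by $V(\Gamma)-st(v)$ is disconnected; let $C_1$ and $C_2$ be two distinct (nonempty) connected components of it, pick $w_i\in V(C_i)$, and fix a nontrivial element $x\in G_v$ (possible, since the vertex groups are nontrivial). Set $\pi:=\pi_{v,x,C_1}\in{\rm Aut}(G_\Gamma)$. First I would check that $\pi\in{\rm IA}(G_\Gamma)$: since $G_\Gamma^{ab}=\prod_{u\in V(\Gamma)}G_u^{ab}$ and $\pi$ acts on each vertex group $G_u$ either as the identity or as conjugation by $x$, the map it induces on each $G_u^{ab}$, hence on $G_\Gamma^{ab}$, is the identity; thus $\Phi(\pi)={\rm id}$ and $\pi\in\ker\Phi$.

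The core of the argument is to show that $\pi$ is not inner. Assume for contradiction that $\pi$ equals conjugation by some $g\in G_\Gamma$. Then $gag^{-1}=xax^{-1}$ for all $a\in G_{w_1}$ and $gbg^{-1}=b$ for all $b\in G_{w_2}$. The vertices $v,w_1,w_2$ are pairwise non-adjacent (indeed $w_1,w_2\notin st(v)$, and $w_1,w_2$ lie in different connected components of the induced subgraph on $V(\Gamma)-st(v)$), so the induced subgraph $\Lambda$ on $\{v,w_1,w_2\}$ has no edges and $G_\Lambda=G_v\ast G_{w_1}\ast G_{w_2}$. Let $r\colon G_\Gamma\to G_\Lambda$ be the retraction that is the identity on $G_u$ for $u\in\{v,w_1,w_2\}$ and kills $G_u$ for every other vertex $u$ (this is a well-defined homomorphism because $\Lambda$ is an induced subgraph, so no defining relation of $G_\Gamma$ is violated). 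Applying $r$ to the two identities and writing $\overline g:=r(g)$ gives $\overline g a\overline g^{-1}=xax^{-1}$ and $\overline g b\overline g^{-1}=b$ inside the free product $G_v\ast G_{w_1}\ast G_{w_2}$. Choosing $1\neq a\in G_{w_1}$, the relation $\overline g a\overline g^{-1}=xax^{-1}$ says that $x^{-1}\overline g$ centralizes $a$; since in a free product the centralizer of a nontrivial element of a free factor is contained in that factor, $x^{-1}\overline g\in G_{w_1}$, i.e. $\overline g=xc$ with $c\in G_{w_1}$. Choosing $1\neq b\in G_{w_2}$, the relation $\overline g b\overline g^{-1}=b$ gives likewise $\overline g\in G_{w_2}$. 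But an element of the free factor $G_{w_2}$ cannot be written as $xc$ with $1\neq x\in G_v$ and $c\in G_{w_1}$: if $c=1$ then $\overline g=x\in G_v\cap G_{w_2}=\{1\}$, contradicting $x\neq 1$; and if $c\neq 1$ then $xc$ is a reduced word of syllable length two, hence not in any single free factor. This contradiction shows $\pi\notin{\rm Inn}(G_\Gamma)$, so $G_\Gamma$ is not semicomplete.

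I do not expect a genuine obstacle here; the proof is short. The points that need care are merely: (i) confirming that "$V(\Gamma)-st(v)$ spans a disconnected subgraph" really produces two nonempty components $C_1,C_2$ witnessing the non-adjacencies of $v,w_1,w_2$ used above; (ii) recording the standard existence of the retraction $r$ onto the full subgroup on an induced subgraph; and (iii) invoking the (standard, Kurosh-type) description of centralizers in a free product. Everything else is routine bookkeeping with normal forms.
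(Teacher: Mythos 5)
Your proposal is correct and follows essentially the same route as the paper: exhibit a partial conjugation $\pi_{v,x,C}$ associated to the separating star, note it lies in ${\rm IA}(G_\Gamma)$, and argue it is not inner. The paper simply asserts the standard fact that a partial conjugation is non-inner exactly when $V(\Gamma)-st(v)$ spans a disconnected subgraph, whereas you verify it via the retraction onto $G_v\ast G_{w_1}\ast G_{w_2}$ and centralizers in free products — a correct and complete justification of the step the paper leaves implicit.
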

\begin{proof}
	The partial conjugation $\pi_{v,x, C}$ is not inner exactly when the subgraph spanned by the vertex set  $V(\Gamma)-st(v)$ is not connected. Thus,  if $\Gamma$ has a separating star for a vertex $v$, then the partial conjugation $\pi_{v,x, C}$ lies in IA$(G_\Gamma)$ but this automorphism is not inner. 
\end{proof}

Apart from the partial conjugations, there are more types of automorphisms of $G_\Gamma$, for example \textit{factor automorphisms}. 

\begin{definition}
Let $G_\Gamma$ be a graph product of groups and let $f_v\in{\rm Aut}(G_v)$ be an automorphism of the vertex group $G_v$. A \emph{factor automorphism} $f\in {\rm Aut}(G_\Gamma)$ is  induced by
	\[h\mapsto \left\{
\begin{array}{ll}
	f_v(h) & \text{if } h\in G_v \\
	h &  \text{if }h\in G_w, w\in V(\Gamma), w\neq v.  \\
\end{array}
\right. \]	  
\end{definition}

\begin{lemma}\label{abelian}
	Let $G_\Gamma$ be a graph product of groups. Assume that $V(\Gamma)$ is not a star of a vertex. If $G_\Gamma$ is semicomplete, then all vertex groups $G_v$ are abelian.
\end{lemma}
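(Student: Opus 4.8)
The plan is to prove the contrapositive: if some vertex group $G_v$ is non-abelian, then $G_\Gamma$ is not semicomplete, i.e.\ ${\rm IA}(G_\Gamma)\supsetneq{\rm Inn}(G_\Gamma)$. Since $G_v$ is non-abelian, $Z(G_v)\neq G_v$, so I would fix an element $g\in G_v\setminus Z(G_v)$. Conjugation by $g$ is then a non-trivial automorphism of $G_v$ which, being inner on $G_v$, acts trivially on $G_v^{ab}$. Let $f\in{\rm Aut}(G_\Gamma)$ be the factor automorphism it induces (the identity on every other vertex group). Since the abelianization of a graph product of groups is the direct product $\prod_{w\in V(\Gamma)}G_w^{ab}$ of the abelianizations of its vertex groups, and $f$ restricts to a conjugation on $G_v$ and to the identity on each $G_w$ with $w\neq v$, the induced automorphism $\Phi(f)$ of $G_\Gamma^{ab}$ is the identity. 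Hence $f\in{\rm IA}(G_\Gamma)$, and the whole point is to show that $f$ is not inner.

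This is where the hypothesis that $V(\Gamma)$ is not a star enters: applied to $v$ it produces a vertex $w_0$ with $w_0\notin st(v)$; in particular $w_0\neq v$ and $w_0$ is not adjacent to $v$, so the induced subgraph on $\{v,w_0\}$ has no edge and the subgroup $\langle G_v,G_{w_0}\rangle\leq G_\Gamma$ is canonically the free product $G_v\ast G_{w_0}$. I would then use the two canonical retractions of a graph product onto the subgroups associated with induced subgraphs: a retraction $\rho\colon G_\Gamma\to G_v\ast G_{w_0}$ (kill every vertex group except $G_v$ and $G_{w_0}$) and $\rho_v\colon G_\Gamma\to G_v$ (kill every vertex group except $G_v$), noting that $\rho_v$ is the composite of $\rho$ with the obvious projection $G_v\ast G_{w_0}\to G_v$.

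Now suppose towards a contradiction that $f(h)=aha^{-1}$ for all $h\in G_\Gamma$ and some $a\in G_\Gamma$. Since $f$ fixes $G_{w_0}$ pointwise, $a$ centralizes $G_{w_0}$; applying $\rho$ shows that $\rho(a)$ centralizes $G_{w_0}$ inside $G_v\ast G_{w_0}$, and since in a non-trivial free product the centralizer of a non-trivial element of a factor is contained in that factor, $\rho(a)\in G_{w_0}$; hence $\rho_v(a)=1$. On the other hand, for every $h\in G_v$ we have $aha^{-1}=f(h)=ghg^{-1}$, so $g^{-1}a$ centralizes $G_v$; applying the homomorphism $\rho_v$ and using $\rho_v(a)=1$ gives $g^{-1}=\rho_v(g^{-1}a)\in Z(G_v)$, contradicting the choice of $g$. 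Therefore $f\in{\rm IA}(G_\Gamma)\setminus{\rm Inn}(G_\Gamma)$ and $G_\Gamma$ is not semicomplete.

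I expect the only genuine obstacle to be this last, non-innerness step: one has to extract exactly the right consequence of ``$V(\Gamma)$ is not a star'', namely the appearance of a free factor $G_v\ast G_{w_0}$, and then invoke the standard structural facts about graph products -- that they retract onto the subgroups of induced subgraphs (immediate from the presentation) and that the centralizer of a non-trivial element of a free factor in a free product lies in that factor (classical, e.g.\ via Bass--Serre theory or the Kurosh subgroup theorem). Everything else -- the verification that $f\in{\rm IA}(G_\Gamma)$, the fact that $f$ is actually non-trivial (which follows from $g\notin Z(G_v)$ but is not even needed for the argument), and the bookkeeping with the retractions -- is routine.
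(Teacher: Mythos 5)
Your proof is correct and follows the same route as the paper: exhibit the factor automorphism acting as conjugation by a non-central $g\in G_v$ on $G_v$ and trivially elsewhere, observe it lies in ${\rm IA}(G_\Gamma)$, and use a vertex outside $st(v)$ to rule out innerness. The only difference is that the paper simply asserts non-innerness from $st(v)\neq V(\Gamma)$, whereas you supply the full justification via the retraction onto $G_v\ast G_{w_0}$ and the centralizer structure of free products.
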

\begin{proof}
	Assume that $G_v$ is not abelian, then there exists $g\in G_v$ such that $g\notin Z(G_v)$. Then the factor automorphism  that maps $h$ to $ghg^{-1}$ for $h\in G_v$ and $h\mapsto h$ for $h\in G_w$, $w\neq v$ is contained in IA$(G_\Gamma)$ but this automorphism is not inner, since $st(v)\neq V(\Gamma)$.
\end{proof}

\begin{lemma}\label{subgroup}
	If $G_\Gamma$ is semicomplete, then all vertex groups $G_u$ for $u\in V(\Gamma)$ are semicomplete.
\end{lemma}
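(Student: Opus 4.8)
The plan is to argue by contraposition: if some vertex group $G_u$ fails to be semicomplete, I will promote a non-inner ${\rm IA}$-automorphism of $G_u$ to a non-inner ${\rm IA}$-automorphism of $G_\Gamma$, which contradicts the hypothesis.

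First I would recall two standard features of graph products that do all the work. (1) The images of the vertex groups generate $G_\Gamma^{ab}$; in fact $G_\Gamma^{ab}\cong\bigoplus_{v\in V(\Gamma)}G_v^{ab}$, because the commutation relators defining $G_\Gamma$ already lie in the commutator subgroup of the free product $\ast_{v\in V(\Gamma)}G_v$. (2) For each $u\in V(\Gamma)$ there is a retraction homomorphism $r_u\colon G_\Gamma\to G_u$ that is the identity on $G_u$ and sends $G_w$ to $1$ for every $w\neq u$; it is well defined on $G_\Gamma$ (not merely on the free product) since every defining commutation relator maps to $1$ under it.

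Then, assuming $G_u$ is not semicomplete, I would fix $f_u\in{\rm IA}(G_u)\setminus{\rm Inn}(G_u)$ and let $\widehat{f}\in{\rm Aut}(G_\Gamma)$ be the factor automorphism induced by $f_u$. To see $\widehat{f}\in{\rm IA}(G_\Gamma)$: by definition $\widehat{f}$ acts on the vertex group $G_v$ as $f_v$, with $f_v={\rm id}$ for $v\neq u$, so the induced automorphism of $G_\Gamma^{ab}$ restricts on the summand $G_v^{ab}$ to $\Phi_{G_v}(f_v)$, which is the identity for $v\neq u$ and is also the identity for $v=u$ because $f_u\in{\rm IA}(G_u)=\ker\Phi_{G_u}$; as these summands generate $G_\Gamma^{ab}$, this forces $\widehat{f}\in{\rm IA}(G_\Gamma)$. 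To see $\widehat{f}$ is not inner: if $\widehat{f}(x)=gxg^{-1}$ for a fixed $g\in G_\Gamma$, then $f_u(h)=ghg^{-1}$ for all $h\in G_u$, and applying $r_u$ (using $r_u|_{G_u}={\rm id}$ and $f_u(h)\in G_u$) gives $f_u(h)=r_u(g)\,h\,r_u(g)^{-1}$ for all $h\in G_u$, i.e. $f_u\in{\rm Inn}(G_u)$, contradicting the choice of $f_u$. Hence $\widehat{f}\in{\rm IA}(G_\Gamma)\setminus{\rm Inn}(G_\Gamma)$ and $G_\Gamma$ is not semicomplete.

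I do not expect a genuine obstacle here; the only points deserving a line of justification are that $r_u$ descends to $G_\Gamma$ and that $f_u\in{\rm IA}(G_u)$ forces $\Phi_{G_u}(f_u)={\rm id}$, both immediate from the definitions. Note that, unlike the earlier lemmata, this argument uses no combinatorial hypothesis on $\Gamma$ whatsoever.
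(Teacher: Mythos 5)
Your proof is correct and follows essentially the same route as the paper: contraposition via the factor automorphism induced by a non-inner ${\rm IA}$-automorphism of a vertex group. The only difference is that you supply explicit justifications (the direct-sum decomposition of $G_\Gamma^{ab}$ and the retraction $r_u$) for the two steps the paper dismisses as immediate, which is a welcome but inessential elaboration.
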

\begin{proof}
	Assume that there exists a vertex group $G_v$ that is not semicomplete, then there exists an automorphism $f_v\in{\rm Aut}(G_\Gamma)$ such that $f_v\in{\rm IA}(G_v)-{\rm Inn}(G_v)$. Now let $f\in{\rm Aut}(G_\Gamma)$ be the factor automorphism induced by $f_v$. By construction  $f\in{\rm IA}(G_\Gamma)$, but $f$ is clearly not in ${\rm Inn}(G_\Gamma)$.
\end{proof}

We want to remark that the graph product $G_\Gamma$ of finitely generated abelian vertex groups is isomorphic to
the graph product of groups $G_{\Gamma'}$ obtained by replacing each vertex $v$ by a clique with vertices labeled by the cyclic summands of $G_v$ and where the finite cyclic summands have prime power orders. Moreover we make the following observation

\begin{lemma}\label{NoSepStarSIL} Let $G_\Gamma$ denote a graph product of finitely generated abelian groups. Let $G_{\Gamma'}$ denote the isomorphic graph product of cyclic vertex groups where the cyclic vertex groups have prime power order obtained by the above construction. Then $G_\Gamma$ has no seperating star (no SIL) if and only if $G_{\Gamma'}$ has no separating star (no SIL).
\end{lemma}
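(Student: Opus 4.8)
The plan is to observe that $\Gamma'$ is nothing but a \emph{blow-up} of $\Gamma$ and to transfer the two combinatorial properties through the blow-up map. Write $p\colon V(\Gamma')\to V(\Gamma)$ for the map sending each vertex of the clique $K_v$ that replaces $v$ to $v$, so that $p^{-1}(v)=K_v$ is a clique on $n_v\geq 1$ vertices (the number of cyclic summands of $G_v$ in the chosen decomposition; $n_v\geq 1$ because $G_v$ is a non-trivial finitely generated abelian group), and for $v\neq w$ the vertex set $K_v\cup K_w$ induces a complete bipartite graph in $\Gamma'$ exactly when $\{v,w\}\in E(\Gamma)$. I intend to reduce both equivalences to two routine identities, namely $st_{\Gamma'}(v')=p^{-1}(st_\Gamma(p(v')))$ for every $v'\in V(\Gamma')$, and $lk_{\Gamma'}(x')\cap lk_{\Gamma'}(y')=p^{-1}(lk_\Gamma(x)\cap lk_\Gamma(y))$ whenever $x'\in K_x$ and $y'\in K_y$ are non-adjacent (which forces $x\neq y$ and $\{x,y\}\notin E(\Gamma)$, and then $st_\Gamma(x)\cap st_\Gamma(y)=lk_\Gamma(x)\cap lk_\Gamma(y)$). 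Both identities follow directly by inspecting which vertices of $\Gamma'$ are adjacent to $v'$, resp.\ to both $x'$ and $y'$.

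The technical heart — though still elementary — is a \emph{component lemma} for blow-ups, which I would prove first: if $W\subseteq V(\Gamma)$, $\Delta$ is the subgraph of $\Gamma$ spanned by $W$, and $\Delta'$ is the subgraph of $\Gamma'$ spanned by $p^{-1}(W)$, then the connected components of $\Delta'$ are precisely the sets $p^{-1}(C)$ as $C$ ranges over the connected components of $\Delta$. The downward direction is that any path in $\Delta'$ projects under $p$ to a walk in $\Delta$ (consecutive vertices lie either in a common clique or in cliques over adjacent vertices), so $p$ never merges distinct components; the upward direction is that each $K_v$ is connected and every edge, hence every path, of $\Delta$ lifts to $\Delta'$ by choosing representatives, so $p^{-1}$ never splits a component. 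In particular $\Delta'$ is disconnected iff $\Delta$ is, and, when these graphs are non-empty, the component of a vertex $z'\in K_z$ in $\Delta'$ is exactly $p^{-1}$ of the component of $z$ in $\Delta$.

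Granting this, the separating-star equivalence is immediate: the first identity gives $V(\Gamma')-st_{\Gamma'}(v')=p^{-1}(V(\Gamma)-st_\Gamma(v))$, which is the induced blow-up of $V(\Gamma)-st_\Gamma(v)$, so by the component lemma it is disconnected iff $V(\Gamma)-st_\Gamma(v)$ is; since $p$ is onto with non-empty fibres, $\Gamma'$ has a separating star (at some $v'$) iff $\Gamma$ has one (at $v=p(v')$). For SILs, note that the three vertices of a SIL of $\Gamma'$ are pairwise non-adjacent, hence lie in three distinct cliques $K_x,K_y,K_z$ over pairwise non-adjacent vertices $x,y,z$ of $\Gamma$; the second identity then makes $V(\Gamma')-(lk_{\Gamma'}(x')\cap lk_{\Gamma'}(y'))$ the induced blow-up of $V(\Gamma)-(lk_\Gamma(x)\cap lk_\Gamma(y))$, and the component lemma applied to $z'\in K_z$ shows that $(x',y'\mid z')$ is a SIL of $\Gamma'$ iff $(x,y\mid z)$ is a SIL of $\Gamma$. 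Since all cliques are non-empty one passes freely between SILs of $\Gamma$ and of $\Gamma'$ by projecting along $p$ or by choosing representatives, giving the SIL equivalence. I do not expect a genuine obstacle here: the only points requiring care are the two link/star computations and the (essentially vacuous) cases in which the subgraphs in question are empty; the ``main'' step is simply getting the component lemma for blow-ups stated and checked cleanly.
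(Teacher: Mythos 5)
Your proof is correct. Note that the paper itself states this lemma as a bare ``observation'' and supplies no proof whatsoever, so there is no argument of the authors' to compare against; your blow-up map $p$, the two star/link identities, and the component lemma for blow-ups together constitute exactly the routine verification the authors implicitly leave to the reader, and all three steps check out (including the point that non-adjacency of $x'$ and $y'$ forces $x\neq y$ and $\{x,y\}\notin E(\Gamma)$, which is what makes the link-intersection identity work).
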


Let us discuss one small example. The graph on the left in Figure 8 has one vertex $v$ that is a direct product of two groups $\mathbb{Z}/2\mathbb{Z}\times\mathbb{Z}$, so we replace this vertex by an edge $\left\{x,y\right\}$ whose vertices are labeled with the groups $\mathbb{Z}/2\mathbb{Z}$ and $\mathbb{Z}$. We also add edges $\left\{z,x\right\}$ and $\left\{z,y\right\}$ in $\Gamma'$ for all $z\in lk(v)$.

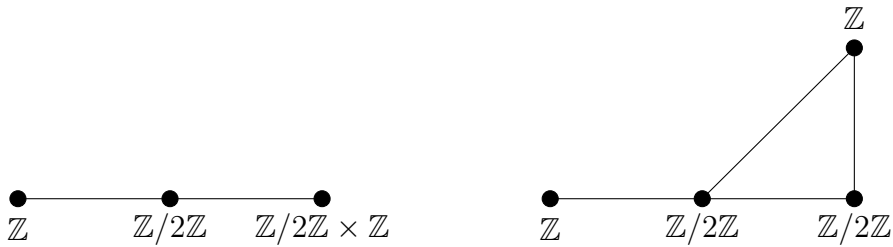
\begin{figure}[h]
	\begin{center}
		\begin{tikzpicture}
			\draw[fill=black]  (0,0) circle (3pt);
			\draw[fill=black]  (2,0) circle (3pt);
			\draw[fill=black]  (4,0) circle (3pt);		
			\node at (0,-0.4) {$\mathbb{Z}$}; 
			\node at (2,-0.4) {$\mathbb{Z}/2\mathbb{Z}$};
			\node at (4,-0.4) {$\mathbb{Z}/2\mathbb{Z}\times\mathbb{Z}$};		
			\draw (0,0)--(2,0);	
			\draw (2,0)--(4,0);

			\draw[fill=black]  (7,0) circle (3pt);
			\draw[fill=black]  (9,0) circle (3pt);
			\draw[fill=black]  (11,0) circle (3pt);	
			\draw[fill=black]  (11,2) circle (3pt);	
			\node at (7,-0.4) {$\mathbb{Z}$};
			 \node at (11,2.4) {$\mathbb{Z}$};
			\node at (9,-0.4) {$\mathbb{Z}/2\mathbb{Z}$};
			\node at (11,-0.4) {$\mathbb{Z}/2\mathbb{Z}$};		
			\draw (7,0)--(9,0);	
			\draw (9,0)--(11,0);
			\draw (11,0)--(11,2);
			\draw (11,2)--(9,0);		
		\end{tikzpicture}
	\caption{Isomorphic graph products $G_{\Gamma}$ and $G_{\Gamma'}$.}
	\end{center}
\end{figure}

If the vertex groups in a graph product are cyclic, then we do not distinguish between a vertex $v$ and a generator of the vertex group $G_v$, so $G_v=\langle v\rangle$.

For graph products of cyclic groups, the no SIL condition on the graph can be used to show certain types of automorphism for the graph product do not occur. 
\begin{lemma}(\cite[Prop. 5.5]{CorredorGutierrez})
Let $G_\Gamma$ be a graph product of cyclic groups where the finite vertex groups have prime power orders.
Let $u, v, w$ be pairwise distinct vertices in $V(\Gamma)$, the vertex group $G_u$ be infinite cyclic and $\left\{v,w\right\}\notin E(\Gamma)$. If $lk(u)\subseteq st(v), st(w)$ then the map 
	\[h\mapsto \left\{
\begin{array}{ll}
	hvwv^{-1}w^{-1} &\text{if } h=u \\
	h &\text{if } h\in V(\Gamma), h\neq u.  \\
\end{array}
\right. \]
 induces an automorphism of $G_\Gamma$ and is called a \emph{commutator transvection}.
 \end{lemma}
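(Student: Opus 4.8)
The plan is to verify directly that the prescribed assignment on the standard generating set respects the defining relators of $G_\Gamma$, and then to exhibit an explicit inverse. Recall that $G_\Gamma$ has presentation with generating set $V(\Gamma)$, one relator $x^{n_x}$ for each vertex whose group is finite cyclic of order $n_x$ (and no relator at the infinite cyclic vertices), and one relator $[x,y]=xyx^{-1}y^{-1}$ for each edge $\{x,y\}\in E(\Gamma)$. So I would first record that it suffices to show the assignment $\phi\colon u\mapsto u[v,w]$, $x\mapsto x$ for $x\neq u$, with $[v,w]:=vwv^{-1}w^{-1}$, sends every defining relator of $G_\Gamma$ to $1$; by von Dyck's lemma $\phi$ then extends to an endomorphism of $G_\Gamma$.

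Next I would dispose of the easy relators. The relators $x^{n_x}$ with $x\neq u$ and $[x,y]$ with $u\notin\{x,y\}$ are fixed by $\phi$, so nothing is to be checked there; and there is no relator $u^{n_u}$ at all, since $G_u$ is infinite cyclic. This is the single place where the hypothesis on $G_u$ is used, and it is exactly what would fail for a finite vertex group, since $(u[v,w])^{n_u}$ is not a relator of $G_\Gamma$ in general. The only relators left are the edge relators $[u,x]$ with $x\in lk(u)$, and since $u$ already commutes with $x$ there, I would reduce the claim to: $[v,w]$ commutes with every $x\in lk(u)$.

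The crux — and the one step I expect to need an idea rather than bookkeeping — is the observation that $v\notin lk(u)$ and $w\notin lk(u)$. This comes from combining $lk(u)\subseteq st(v)\cap st(w)$ with $\{v,w\}\notin E(\Gamma)$: if $v\in lk(u)$ then $v\in st(w)=\{w\}\cup lk(w)$, and $v\neq w$ forces $v\in lk(w)$, i.e. $\{v,w\}\in E(\Gamma)$, a contradiction; the argument for $w$ is symmetric. Granting this, any $x\in lk(u)$ lies in $st(v)\cap st(w)$ but is distinct from both $v$ and $w$, hence $x\in lk(v)\cap lk(w)$. Thus $x$ commutes with $v$ and with $w$, and therefore with $[v,w]=vwv^{-1}w^{-1}$. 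This completes the verification that $\phi$ is an endomorphism of $G_\Gamma$.

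Finally I would show $\phi$ is bijective by producing its inverse: the assignment $\psi\colon u\mapsto u\,[v,w]^{-1}=u\,wvw^{-1}v^{-1}$, $x\mapsto x$ for $x\neq u$. The relator check for $\psi$ is the same argument verbatim, with $[v,w]$ replaced by $[v,w]^{-1}$ (an element commutes with $[v,w]$ iff it commutes with $[v,w]^{-1}$, and $v,w$ themselves are untouched), so $\psi$ is an endomorphism of $G_\Gamma$. Since $\psi$ fixes $v$ and $w$, it fixes $[v,w]$, whence $\psi\phi(u)=\psi(u)\,\psi([v,w])=u[v,w]^{-1}[v,w]=u$, and likewise $\phi\psi(u)=u$, both maps fixing every other generator; hence $\phi\psi=\psi\phi={\rm id}$ and $\phi\in{\rm Aut}(G_\Gamma)$. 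I expect everything in this plan except the observation $v,w\notin lk(u)$ to be entirely routine.
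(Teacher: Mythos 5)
Your proof is correct. Note that the paper itself gives no argument for this lemma at all: it is quoted verbatim from Corredor--Gutierrez \cite[Prop.~5.5]{CorredorGutierrez}, so there is no in-paper proof to compare against. Your von Dyck verification is the natural elementary route and is complete: the reduction to checking that $[v,w]$ commutes with every $x\in lk(u)$ is right; the key combinatorial step ($v,w\notin lk(u)$, hence $lk(u)\subseteq lk(v)\cap lk(w)$, using $\{v,w\}\notin E(\Gamma)$ and the pairwise distinctness of $u,v,w$) is exactly where the hypotheses enter; and the explicit inverse $u\mapsto u[v,w]^{-1}$ settles bijectivity without appeal to any normal-form theory. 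You also correctly isolate where infiniteness of $G_u$ is used (absence of a power relator at $u$; note $[v,w]$ need not commute with $u$ since $v,w\notin st(u)$, so the map would indeed fail to respect a relator $u^{n}$). The one standing hypothesis you do not use, that finite vertex groups have prime power orders, is genuinely irrelevant to this particular statement -- it is inherited from the normal form the paper fixes for graph products of finitely generated abelian groups and is needed elsewhere (for the generation results of \cite{SaleSusse}), not here.
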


\begin{lemma}\label{noCommutatorTransvestions}
	Let $G_\Gamma$ denote a graph product of cyclic groups where the finite groups have prime power orders. If $\Gamma$ has no SIL, then ${\rm Aut}(G_\Gamma)$ contains no commutator transvections.
\end{lemma}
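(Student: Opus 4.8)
The plan is to prove the contrapositive, reducing everything to the combinatorial notion of SIL. By definition a commutator transvection of $G_\Gamma$ arises from a triple of pairwise distinct vertices $u,v,w\in V(\Gamma)$ with $G_u$ infinite cyclic, $\{v,w\}\notin E(\Gamma)$, and $lk(u)\subseteq st(v)$, $lk(u)\subseteq st(w)$. So it suffices to show: if such a triple exists, then $\Gamma$ has a SIL. I would fix such a triple $(u,v,w)$ and argue that $(v,w\mid u)$ is a SIL.

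First I would check that $u,v,w$ are pairwise non-adjacent. They satisfy $\{v,w\}\notin E(\Gamma)$ by hypothesis, so only the pairs involving $u$ need attention. If $\{u,v\}\in E(\Gamma)$, then $v\in lk(u)\subseteq st(w)=\{w\}\cup lk(w)$; since $v\neq w$ this forces $v\in lk(w)$, i.e. $\{v,w\}\in E(\Gamma)$, a contradiction. The argument ruling out $\{u,w\}\in E(\Gamma)$ is symmetric.

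Next I would upgrade the star-containments to link-containments: since $v\notin lk(u)$ (by the previous step) and $st(v)=\{v\}\cup lk(v)$, we get $lk(u)\subseteq lk(v)$, and likewise $lk(u)\subseteq lk(w)$, so $lk(u)\subseteq lk(v)\cap lk(w)=:A$. Now consider the induced subgraph on $V(\Gamma)-A$. The vertices $u,v,w$ all lie in it: $v\notin lk(v)$ and $w\notin lk(w)$ give $v,w\notin A$, and $u\notin lk(v)$ gives $u\notin A$. In this subgraph the link of $u$ is $lk(u)-A=\emptyset$ by the containment just established, so $u$ is an isolated vertex of $\Gamma-A$. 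Hence the connected component of $\Gamma-(lk(v)\cap lk(w))$ containing $u$ is $\{u\}$, which contains neither $v$ nor $w$; together with pairwise non-adjacency this says exactly that $(v,w\mid u)$ is a SIL of $\Gamma$. This contradicts the hypothesis that $\Gamma$ has no SIL, and the lemma follows.

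I do not expect a real obstacle in this argument; it is essentially bookkeeping. The one point that must be gotten right — and is the conceptual heart — is the observation that, once one removes $lk(v)\cap lk(w)$, the vertex $u$ has no neighbours left at all: the hypothesis $lk(u)\subseteq st(v),st(w)$ combined with the non-adjacency of $u$ to $v$ and to $w$ is precisely strong enough to force $lk(u)\subseteq lk(v)\cap lk(w)$, which is what isolates $u$ and produces the SIL.
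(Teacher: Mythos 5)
Your proof is correct and follows essentially the same route as the paper's: show the triple is pairwise non-adjacent, deduce $lk(u)\subseteq lk(v)\cap lk(w)$, and conclude that $(v,w\mid u)$ is a SIL. You actually supply one detail the paper leaves implicit, namely that $u$ becomes isolated in $\Gamma - (lk(v)\cap lk(w))$, so its component is $\{u\}$ and therefore omits both $v$ and $w$.
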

\begin{proof}
	This follows from \cite[Prop. 2.7]{Sale}. For the sake of completeness we include the proof here. We assume for contradiction that ${\rm Aut}(G_\Gamma)$ has a commutator transvection. Hence there exist pairwise disjoint vertices $u, v, w$, with $G_u\cong\mathbb{Z}, \left\{v,w\right\}\notin E(\Gamma)$ and $lk(u)\subseteq st(v), st(w)$. We note that $\left\{u,v\right\}\notin E(\Gamma), \left\{u,w\right\}\notin E(\Gamma)$, since if  $v\in lk(u)\subseteq st(w)$, then $v\in st(w)$ and $\left\{v,w\right\}\in E(\Gamma)$,  contradiction. Therefore we have $lk(u)\subseteq lk(v)\cap lk(w)$ and $(v,w\mid u)$ is a SIL.
\end{proof}

\begin{NewTheoremC}\label{fgabelian}	
	Let $G_\Gamma$ be a graph product of finitely generated abelian groups.  The following statements are equivalent:
	\begin{enumerate}
		\item The group $G_\Gamma$ is semicomplete.
		\item The graph $\Gamma$ does not have a separating star.
	\end{enumerate}
\end{NewTheoremC}
\begin{proof}
	If $\Gamma$ has a separating star, then $G_\Gamma$ cannot be semicomplete, since a separating star always induces a partial conjugation that is not a global conjugation (see Lemma \ref{hasSeparatingStar}). 
	
	So now suppose that $\Gamma$ does not have a separating star. Without loss of generality we can also assume that all vertex groups are cyclic and the finite vertex groups have prime power orders by Lemma \ref{NoSepStarSIL} and its preceding paragraph. Due to \cite[Thm. 5.1]{SaleSusse} we know that IA$(G_\Gamma)$ is generated by partial conjugations and commutator transvections. Since $\Gamma$ has no separating star, every partial conjugation already has to be a global conjugation. According to Lemma \ref{noCommutatorTransvestions}, commutator transvections can only exist if $\Gamma$ has a SIL. Now Proposition \ref{noSIL} shows that if $\Gamma$ does not have a separating star, then $\Gamma$ also has no SIL. Thus IA$(G_\Gamma)$ consists only of conjugations, which is what we had to show.
\end{proof}

Before we prove Proposition C and E from the introduction we need to define one final type of automorphism.
\begin{definition}
	Let $\Gamma$ be a graph with labeled vertices. An \textit{automorphism of the labeled graph} is an automorphism of the graph $\Gamma$ which respects labels, i.e. for every vertex $v\in V(\Gamma)$, the vertices $v$ and $f(v)$ have the same label. 
	
	For a graph product, these automorphisms of the labeled graph induce an automorphism of the graph product, which we call a \textit{graph automorphism} by slight abuse of notation, since the automorphism group of the labeled graph naturally embeds into the automorphism group of the graph product.
\end{definition}
\begin{proof}[Proof of Proposition E]
	
	If $G_\Gamma$ is semicomplete, then Lemma \ref{abelian} implies that all vertex groups are abelian. So all that remains to show is the other direction.
	
	For the other direction we invoke \cite[Cor. 8.2]{Genevois}. This implies that Aut$(G_\Gamma)$ is generated by three types of automorphisms, graph automorphisms, partial conjugations and local automorphisms of the vertex groups. If $\Phi\colon {\rm Aut}(G_\Gamma)\to {\rm Aut}(G_\Gamma^{ab})$ is the induced homomorphism, then graph automorphisms and local automorphisms are not in the kernel of $\Phi$, see \cite[Lemma 4.1]{GenevoisMartin}. Thus the kernel consists only of partial conjugations. Since we assume that $\Gamma$ has no separating star, each partial conjugation is a global conjugation which means that $G_\Gamma$ is semicomplete. Note that Theorem \ref{fgabelian} does not apply here since the vertex groups are not necessarily finitely generated.
\end{proof}

\begin{proof}[Proof of Proposition C]
	Due to the definition of $\Delta$,  it is straightforward to verify that the center of $G_\Gamma$ is given by $G_\Delta$. 
	 
	Since $Z(G_\Gamma)=G_\Delta$ is characteristic, every $f\in {\rm Aut}(G_\Gamma)$ induces two automorphisms, 
	$$f'\colon G_\Delta\to G_\Delta\text{ and }f''\colon G_\Gamma/G_\Delta\to G_\Gamma/G_\Delta.$$ 
	
	Because of the definition of $\Delta$, we have 	
	$$G_\Gamma\cong G_\Delta\times G_{\Gamma-\Delta}\text{ and therefore }G_\Gamma/G_{\Delta}\cong G_{\Gamma-\Delta},$$
	 
	where $\Gamma-\Delta$ is the subgraph spanned by the vertex set $V(\Gamma)-V(\Delta)$.
	
	Let $\Phi\colon {\rm Aut}(G_\Gamma)\to {\rm Aut}(G^{ab}_\Gamma)$ denote the canonical map induced by the abelianization. 
	Suppose that $G_{\Gamma-\Delta}$ is semicomplete and let $f\in{\rm ker}(\Phi)$ denote an arbitrary element. We can write $f=(f',f'')$, where $f'\colon G_\Delta\to G_\Delta$ and $f''\colon G_{\Gamma-\Delta}\to G_{\Gamma-\Delta} $. Let $\Phi_{G_\Delta}$ and $\Phi_{G_{\Gamma-\Delta}}$ denote the restrictions of $\Phi$ to the factors of $G_\Gamma$, which is well defined since $G_\Delta$ is a characteristic subgroup. Since $f\in {\rm ker}(\Phi)$, we also have $f'\in {\rm ker}(\Phi_{G_\Delta})$ and $f''\in {\rm ker}(\Phi_{G_{\Gamma-\Delta}})$, $f''$ is a conjugation, since $G_{\Gamma-\Delta}$ is semicomplete. Since $G_\Delta$ is abelian, we can deduce that $f'=id$ (due to semicompleteness). Let $f''$ denote the conjugation by $a$. We show that $f$ is a conjugation by $a$. Given $x\in G_\Delta$ and $y \in G_{\Gamma-\Delta}$ arbitrary we obtain:
	 $$f(x,y)=(f'(x),f''(y))=(x,aya^{-1})=(axa^{-1},aya^{-1})$$
	 Thus $f$ is a conjugation and therefore and inner automorphism, showing $G_\Gamma$ is semicomplete.
	
	On the other hand if $G_\Gamma$ is semicomplete, we interpret $G_\Gamma$ as a different graph product, given by a graph with two vertices and one edge between them. The vertices are labeled $G_\Delta$ and $G_{\Gamma-\Delta}$, see Figure 9.

	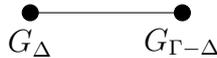
\begin{figure}[h!]
		\begin{center}
			\begin{tikzpicture}
				\draw[fill=black]  (0,0) circle (3pt);
				\draw[fill=black]  (2,0) circle (3pt);		
			\node at (0,-0.4) {$G_\Delta$}; 
				\node at (2,-0.4) {$G_{\Gamma-\Delta}$};		
			\draw (0,0)--(2,0);		
			\end{tikzpicture}
		\caption{Graph product $G_\Gamma$.}
		\end{center}
	\end{figure}

In this case we apply Lemma \ref{subgroup} to see, that $G_{\Gamma-\Delta}$ is semicomplete.
	
\end{proof}

%%%%%%%%%%%%%%%%%%%%%%%%%%%%%%%%%%%%%%%%%%%%%%%%%%%%%%%%%%%%%%%%%%%%%%%%%%%%%%%%%%%%%%%%%%%%%%%%%%%%%%%%%%%%%%%%%%%%%%%%%%%%%%%%%%%%%%%%%%%%%%%%%%%%%%%%%%

\section{Complete graph products of groups}

Many interesting groups have finite outer automorphism groups, i. e. the group ${\rm Out}(G):={\rm Aut}(G)/{\rm Inn}(G)$ is finite. For example, $\#{\rm Out}({\rm Sym}(n))=1$ for $n\neq 2,6$, $\# {\rm Out}({\rm Aut}(F_n))=1$ for $n\geq 3$ (see \cite{DyerFormanek}, \cite{BridsonVogtmann}) and  $\#{\rm Out}({\rm Aut}(\mathbb{Z}^n))\le 4$. Furthermore, the outer automorphism group of the mapping class group is also finite \cite{Ivanov}. 
\medskip

Let $G$ be a centerless group, then there is a natural embedding of $G$ into its automorphism group ${\rm Aut}(G)$, obtained by sending each $g\in G$ to the corresponding inner automorphism $f_g\in{\rm Aut}(G)$. In particular, ${\rm Aut}(G)$
is also a centerless group. Hence we define the automorphism tower of $G$ :
\[
G\hookrightarrow{\rm Aut}(G)\hookrightarrow{\rm Aut}({\rm Aut}(G))\hookrightarrow\ldots
\]
The automorphism tower terminates if there is a group in the tower which is isomorphic to its automorphism group by the above natural map. Such a group is called \emph{complete}. 

\begin{definition}
	A group $G$ is called \emph{complete}, if $Z(G)=\left\{1\right\}$ and ${\rm Aut}(G)={\rm Inn}(G)$.
\end{definition}

Note that if a group $G$ is complete, then $G$ is semicomplete. Further, if a group $G$ is perfect, i.e. $G=G'$, then $G$ is semicomplete if and only if ${\rm Aut}(G)={\rm Inn}(G)$. Hence, a centerless perfect group $G$ is complete if and only if $G$ is semicomplete.

 In this last paragraph we address the following question: 
\begin{center}
\textbf{When is a graph product of groups complete?}
\end{center}

Using the characterization of semicompleteness of graph products via combinatorial structure of graphs proven in Theorem D we can now show Proposition A from the introduction.

\begin{proof}[Proof of Proposition A]
We begin by showing (i) $\implies$ (ii), the other direction will follow easily from the observations made on the way.
	
First we note that the center of $G_\Gamma$ is trivial if and only if $\Gamma$ is not a star of a vertex or if $\Gamma$ is a star of a vertex $v$ then $Z(G_v)=1$. By assumption the vertex groups have non-trivial center, therefore $\Gamma$ is not a star of a vertex. 

If $G_\Gamma$ is complete, then $G_\Gamma$ is also semicomplete. Since $\Gamma$ is not a star of a vertex, Lemma \ref{abelian} tells us that all vertex groups are abelian and by Lemma \ref{hasSeparatingStar} it follows that $\Gamma$ has no separating star. For a vertex group $G_v$ we have always a factor automorphism induced by the map $g\mapsto -g$ for $g\in G_v$ and ${\rm id}_{G_w}$ for $w\neq v$. This factor automorphism is inner if and only if $g=-g$ for all $g\in G_v$. Thus, every non-trivial element in $G_v$ has order $2$ and since each vertex group is abelian and directly indecomposable, every vertex group is isomorphic to $\mathbb{Z}/2\mathbb{Z}$.

In was proven in \cite{Laurence} that the automorphism group of a graph product where the vertex groups are of order two is generated by: graph automorphisms, partial conjugations and dominated transvections, those are automorphisms defined as follows: Given two vertices $u, v, u\neq v$ such that $st(u)\subseteq st(v)$ then the dominated transvection is induced my the map $u\mapsto uv$, and $w\mapsto w$ for $w\neq u$. For more details and examples of these automorphisms we refer to \cite[Chap. 5]{Leder}. A dominated transvection is in ${\rm Aut}(G_{\Gamma})-{\rm Inn}(G_{\Gamma})$, since $G_\Gamma$ is a right-angled Coxeter group. This implies that if $uv=xux^{-1}$, we can apply the deletion condition to the right side, since it cannot be reduced. This reduces the length of the right hand side by $2$ every time, thus the length of the right hand side will always be an odd number. However the left hand side is the product of two generators and thus has even length.
Hence the graph $\Gamma$ does not have two vertices $u\neq v$  such that $st(u)\subseteq st(v)$. 

Summarizing our observations so far we have proved that if $G_\Gamma$ is complete, then $\Gamma$ is not a star of a vertex, all vertex groups are cyclic of order two and for two distinct vertices $v, w\in V(\Gamma)$ we have $st(v)$ is never contained in $st(w)$. 
	
Further, the graph $\Gamma$ is asymmetric, since a graph automorphism can never be an inner automorphism.

To see $\Gamma$ is connected we first note that $\Gamma$ has at most $2$ connected components, since it has no separating star. If it had $2$ connected components, then each component has to be a clique, due to no separating star condition. Furthermore those cliques can only consist of one vertex, because else the condition $st(v)\not\subseteq st(w)$ for $v\neq w$ is violated. Finally, we cannot have a graph consisting of precisely two vertices not connected by an edge since $\Gamma$ is asymmetric. 

Lastly, a connected graph that is asymmetric, has more than one vertex and has no separating star always has at least $7$ vertices, since an asymmetric graph with $6$ vertices always has a separating star, see Figure 2.

For the other direction we have that for any distinct pair of vertices $v, w\in V(\Gamma)$,  $st(v)$ is never a subset of $st(w)$, therefore $\Gamma$ is not a star of a vertex and we already know that the center of $G_\Gamma$ is trivial due to the first observation. As in the other direction we know that the only possible automorphisms are graph automorphisms, partial conjugations and dominated transvections. By the same argument as in the other direction we can only have partial conjugations. Partial conjugations are global conjugations when there are no separating stars, thus every automorphism is inner.
\end{proof}

\end{document}